\documentclass[11pt]{article}
\textwidth17cm 
\oddsidemargin-0.5cm
\evensidemargin-0.5cm
\textheight22cm
\topmargin-0.5cm
\usepackage{amssymb}
\usepackage{amsmath}
\usepackage{amsfonts}
\usepackage{graphicx}
\usepackage{color}
\usepackage[round]{natbib}
\usepackage{latexsym}
\usepackage{epstopdf}
\usepackage[all]{xy}

\newtheorem{thm} {\noindent \bf{Theorem}}

\newtheorem{lem}{\noindent \bf{Lemma}}

\newenvironment{proof}[1][Proof]{\noindent \textbf{#1.} }{\ \rule{0.5em}{0.5em}}
\newcounter{remno} \setcounter{remno}{0}
\hyphenation{con-ti-nu-ous}
\begin{document}
\title{Inventory Policies for Two Products under Poisson Demand: Interaction between Demand Substitution, Limited Storage Capacity and Replenishment Time Uncertainty}

\author{Apostolos Burnetas \\
Department of Mathematics, University of Athens\\
Athens, Greece\\
aburnetas@math.uoa.gr\\
\and
Odysseas Kanavetas\\
Faculty of Engineering and Natural Sciences, Sabanci University\\
Istanbul, Turkey\\
okanavetas@sabanciuniv.edu}
\medskip

\maketitle

\begin{abstract}
We consider a two-product inventory system with independent Poisson demands, limited joint storage capacity and partial demand substitution. Replenishment is performed simultaneously for both products and the replenishment time may be fixed or exponentially distributed. For both cases we develop a Continuous Time Markov Chain model for the inventory levels and derive expressions for the expected profit per unit time. We prove that the profit function is  submodular in the order quantities, which allows for a more efficient algorithm to determine the optimal ordering policy. Using computational experiments we assess the effect of substitution and replenishment time uncertainty on the order quantities and the profit as a function of the storage capacity.

\end{abstract}

\section{Introduction}\label{intro}
In many multi-product inventory management systems several factors must be considered when designing ordering policies. 
First and foremost, when the demand is stochastic, the system is subject to shortages in any time interval between two successive replenishments, whereas  high stocking levels may incur unnecessary inventory costs. The problem is intensified when there is also uncertainty in the replenishment times. When products are stored and sold from a common facility with limited capacity they compete for storage space, which in turn introduces dependencies in the inventory process even though the individual demand processes may be independent. On the other hand, there is often some degree of demand substitution between different products.  The substitution possibility  must generally be taken into account when designing effective inventory replenishment policies, as it could attenuate the constraints imposed by limited storage capacity.  

In this paper we study the interactions between demand and replenishment time uncertainty, constrained storage capacity and demand substitution, in a two-product joint replenishment system with lost sales and base-stock replenishment. The base-stock  framework is appropriate when there are small or no fixed ordering costs. This is a reasonable assumption when replenishment epochs are exogenously planned and not determined by inventory levels, for example in perishable product settings, or when a supplier makes periodic deliveries according to her own schedule and retailers can purchase product to replenish their stock only at those times. We assume that the two products are purchased at fixed unit costs and sold at fixed retail prices, thus the shortage cost is manifested as foregone profit due to lost sales. On the other hand, to model inventory costs, we impose a, generally product dependent, inventory holding cost for each unsold unit of product at the end of a replenishment cycle. 

The essential feature of our model is the two-way partial demand substitution. We assume that when a customer does not find the product he prefers in stock while the other product is available, he will buy it with a given probability. By indirectly introducing a degree of demand pooling between different products, substitution helps deal with demand uncertainty, especially in situations where storage capacity is tight and wrong stocking decisions  may lead to severe shortages in some products and at the same time unsold quantities in others. On the other hand, accounting for substitution in designing inventory policies introduces substantial complications in the analysis. The main reason is that in general whether a customer will buy a substitute product depends on whether his original choice is available at the time of his arrival to the system. Therefore, in general, sales, shortage and unsold quantities depend not only on the total demand of each product during a replenishment cycle, but also on the timing of arrivals of individual demands of each product. For this reason, a usual approximation in analyzing inventory problems under substitution is to assume that the quantities demanded as substitutes are a fixed deterministic proportion of the demand of one product which is added to the total demand of the other product. 

The main contribution of our model is that it endogenizes the substitution process by modeling the demand arrivals for the two products as independent Poisson processes. This allows for a more realistic assessment of the costs of a replenishment policy and, thus, of the resulting optimal policies. On the other hand, analytic expressions for the profit function are no longer feasible. We instead prove structural properties which in turn enable a more efficient search for optimal policies. 

To model capacity or other restrictions on the replenishment policy, we impose a linear constraint on the order quantities. Finally, we consider two cases for the replenishment epochs. First we assume that the replenishment cycles have fixed constant length. We then consider the case where replenishment epochs occur according to a Poisson process, independent of the inventory levels. For both cases we develop a Markov Process model for the joint inventory levels, derive pertinent expressions for the profit function, and prove that it is submodular in the order quantities. The submodularity property implies a monotonicity structure for the optimal policy, which in turn allows for more efficient computations. 

In addition to the structural results, we conduct several numerical experiments in order to gain insights on the interactions between the various aspects of the model, and mainly between the limitations in  storage capacity, the substitution effects and the uncertainty, or lack thereof, in the replenishment process. Some of the insights obtained are the following. The benefit of demand substitutability is not significant when the available capacity is very low, and thus substantial  shortages cannot be avoided, as well as when the capacity is very high, and thus it does not impose severe restrictions on the order quantities.  However it may  become substantial in intermediate capacity levels, depending on the relative cost parameters of the two products. In terms of the effect on the optimal ordering policy, this also depends on the similarity of the products in terms of their profit/cost parameters. In extreme cases where the two products are significantly different, the existence of substitution effects may result in complete abandonment of the less favorable product. 
Finally, regarding the uncertainty in the replenishment cycle, its effect on the profit is generally detrimental. The effect on the optimal policy also depends on the economic parameters of the two products. Specifically, when the shortage costs are significant compared to the holding costs, the effect of the replenishment time uncertainty is to increase the order quantities, in order to mitigate the increased risk of stockouts, while the opposite effect appears when the holding costs are more significant. 

Many  models of ordering and inventory management with various levels of substitution have been proposed and analyzed in the literature. In a significant number of them it is approximately assumed that the substituted demand of a product is a fixed fraction of the unsatisfied demand of that product. Under no salvage value and lost sale penalties, \cite{parlar} develop an expression of  the profit function, show that it is concave under appropriate conditions and derive optimality conditions for the ordering policy. Assuming positive salvage values and shortage costs, \cite{Khouja} also derive the profit function and establish upper and lower bounds for the optimal ordering quantities.  \cite{stavrulaki} and \cite{krommyda} consider extensions of the ordering problem of substitutable products for stock-dependent demand. In \cite{stavrulaki} the joint effect of demand stimulation and product substitution on inventory decisions by considering a single-period, stochastic demand setting. In \cite{krommyda} the demand of each product is a deterministic function of both inventory levels and two-way substitution is assumed. \cite{netessine2003} also assume deterministic substitution proportions and in addition to the centralized model they also analyze a competitive situation where the substitutable products are stored by different retailers.  

Several approaches have also been developed to model the substitution process in more detail. One such approach employs two-stage stochastic programming, where production/stocking decisions are made in the first stage before demand is realized, where as in the second stage demand allocation to direct and substitute sales is performed as a recourse action. \cite{rao2004multi} consider a single period, multi-product model with fixed production costs and one-way substitution. They develop a mixed integer two-stage stochastic programming model, exploit structural properties and develop several heuristics. \cite{vaagen2011modelling} also consider a multi-product setting with partial substitution possible between any two products. They propose linear and mixed-integer stochastic programming formulations and obtain several managerial insights from solving test problems. 

A different line of works more related to ours addresses the product choice of arriving customer in a dynamic fashion. In \cite{mahajan2001stocking} a newsvendor setting is adopted, where heterogeneous customers dynamically substitute among available products, based on maximizing a utility function. A sample path gradient algorithm is applied to compute inventory levels and Poisson customer arrivals are simplified by normal approximations. \cite{dong2009dynamic} also allow dynamic customer choice, but they focus on the problem of maximizing profit through dynamic pricing in a newsvendor framework. They develop a dynamic programming model for the pricing problem and propose heuristic methods for the initial inventory decisions. \cite{shumsky2009dynamic} and \cite{xu2011optimal} consider dynamic strategies of offering substitution to arriving customers. In \cite{shumsky2009dynamic} demand classes are downward substitutable and customers can be upgraded by at most one level. The optimal substitution policy is shown to have a protection limit structure. \cite{xu2011optimal} consider a nonstationary Poisson process and random batch sizes. They develop a stochastic dynamic programming model and prove threshold properties for the optimal substitution policy. 

Recently,  \cite{salameh} study the two-product joint replenishment model with substitution in an economic order quantity framework. The EOQ assumptions allow to explicitly model the stock evolution prior to and after the depletion of one of the products. It is shown that including the substitution effect in the ordering problem may result in substantial improvements in the total cost compared to the ordinary joint replenishment model. Finally, \cite{deflem2011} develop a discrete-time Markov Chain model for a two-product periodic review system with centrally controlled one way downward substitution. The cost benefit of the substitution policy compared to no-pooling or full-pooling approaches is assessed numerically and it is shown that substitution can outperform both.  
Our paper contributes to the literature in three ways. First we develop Markov Chain models for a two-product system with partial two-way substitution, under fixed and random replenishment times, and develop analytic expressions for the cost as a function of the order quantities. Second we establish the submodularity of the cost function which allows for a more efficient computation of the optimal ordering policy, and third, we obtain several managerial insights on the interaction of substitution with the inventory capacity constraint. 

The paper is structured as follows . 
In Section \ref{model} we present the two approaches according to the replenishment types. In Section 3 we derive the transient distribution of the number of remaining products after a deterministic replenishment time and we prove the submodularity of the profit function. In Section 4 we derive the stationary distribution of the number of remaining products when the replenishment time is exponentially distributed and we show that the profit function is also submodular. In Section 5 we describe an algorithm for the computation of the optimal ordering quantities. In Section 6 we present computational results and managerial insights and in Section 7 conclusions and extensions.

\section{Model description}\label{model}

We consider a retailer who orders and sells two partially substitutable products. Orders for both products are placed at replenishment epochs. The time between two successive replenishments is referred to as a period. Let $r_i, c_i, h_i$ denote the unit retail price, wholesale price and inventory holding cost for quantities in inventory at the end of each period, respectively, for product $i, i=1,2$. Order sizes are denoted by $Q_1, Q_2$. We assume that due to limited storage capacity or other constraints the order quantities are restricted by a linear inequality  $a_1 Q_1 + a_2 Q_2\leq C$, where $a_1, a_2$ nonnegative constants. For example, for $a_1=a_2=1$, the inequality may reflect a storage capacity constraint, and for $a_i=c_i, i=1,2$, a purchasing budget constraint.

Between the two products there is two-way partial substitution. Specifically, we assume that a customer who originally intends to buy product $i$ will switch to product $j$ with probability $p_{ij}, i\neq j$, if the original product is not available. The substitution probabilities $p_{12}, p_{21}$ are exogenous parameters.

We next consider the demand process. Because of the substitution possibility, the actual sales, unsold quantities and shortages of both products, and as a result the profits/costs depend not only on the demand of each, but also on the timing of individual customer arrivals. In fact this is one of the complicating factors in evaluating and optimizing ordering policies, and as such it has been approximated by various assumptions in the literature.
In this paper we model the demand process explicitly, by assuming that the customers who originally intend to buy product $i$, arrive according to a Poisson process with 
 rate $\lambda_i$, $i=1,2$. When a demand of one unit of product $i$ comes and there is no product $i$ available, the customer accepts to buy one unit of product $j$, $j\neq i$, with probability $p_{ij}$, $i,j=1,2$.
The advantage of this modeling assumption is that the effect of the timing of arrivals on the substitution process is endogenized and thus represented more accurately.

Regarding the replenishment period, we consider two approaches, which lead to separate stochastic models.
In the first approach we assume that the duration of the period is deterministic and equal to $T$. This corresponds to the more common situation where the replenishment occurs at fixed regular intervals, such as on  a daily or weekly basis. In the second approach we consider the case where the replenishment time is exponentially distributed with parameter $\mu$, independent of the demand process. The exponential distribution assumption on the one hand allows us to model the inventory process in a computationally tractable  Markovian setting. On the other hand it captures situations where the replenishment process is not completely reliable and can occasionally be very late.

Our aim is to determine the quantities $Q_1$ and $Q_2$ that maximize the expected profit per unit time. To this end, we develop a stochastic model of the inventory process for each of the two approaches about the replenishment time, derive an analytic expression for the profit function for each model and show that it is submodular in the order quantities. The submodularity property leads to a faster algorithm for computing the optimal values of $Q_1$ and $Q_2$.

In the following we will refer to the interval between two successive replenishment epochs as a cycle or period. Since both products are replenishable, the replenishment instants constitute regeneration epochs for the inventory process. Therefore, the profits during successive cycles are independent and identically distributed, thus the expected profit per unit time can be expressed as the ratio of expected cycle profit over expected cycle length.

Specifically, let $(N_1, N_2)$ denote the inventory levels of the two products at the end of a cycle, right before the next replenishment occurs. Then the quantities sold during the cycle are equal to $Q_i-N_i, i=1,2$ and  the cycle profit can be expressed as
\begin{eqnarray*}
\Pi &=& r_1 (Q_1-N_1) + r_2 (Q_2-N_2) - c_1 Q_1 - c_2 Q_2 - h_1 N_1 - h_2 N_2\\
&=&(r_1 -c_1) Q_1 + (r_2-c_2) Q_2 - (r_1+h_1) N_1 - (r_2+h_2) N_2.
\end{eqnarray*}

This expression is valid under both assumptions on cycle duration. However the probability distribution of $(N_1, N_2)$ is different between the two approaches, and as a result the expected profit per period has a different analytic expression. In order to distinguish the two cases and avoid confusion in the subsequent analysis, we will denote the inventory levels under fixed replenishment time as $(n_1, n_2)$, and under random replenishment times as $(m_1, m_2)$. Using this notation, the expected profit per period under fixed replenishment time is equal to
\begin{eqnarray}
\pi^{(1)}(Q_1, Q_2) &=& \frac{1}{T} [
(r_1 -c_1) Q_1 + (r_2-c_2) Q_2  \nonumber \\
&&- (r_1+h_1) E_{Q_1, Q_2} (n_1) - (r_2+h_2) E_{Q_1, Q_2} (n_2) ] ,
\label{profdef1}
\end{eqnarray}
where $E_{Q_1, Q_2}$ denotes expectation under replenishment policy $(Q_1, Q_2)$.
For the case of random replenishment time, the expected cycle length is equal to $\mu^{-1}$, therefore the corresponding expression for the profit rate is
\begin{eqnarray}
\pi^{(2)}(Q_1, Q_2) &=& \mu [
(r_1 -c_1) Q_1 + (r_2-c_2) Q_2  \nonumber \\
&&- (r_1+h_1) E_{Q_1, Q_2} (m_1) - (r_2+h_2) E_{Q_1, Q_2} (m_2) ] .
\label{profdef2}
\end{eqnarray}

Under both approaches, it is desired to identify the optimal stocking levels $Q_1, Q_2$ that maximize the expected profit per unit time, subject to the stocking constraint, i.e.,
\begin{equation}
  \label{eq:optim_problem}
  \max\{ \pi^{(i)}(Q_1, Q_2): a_1 Q_1 + a_2 Q_2 \leq C \},
\end{equation}
for $i=1, 2$.

In the remainder of this section we develop the stochastic model that describes the inventory process and derive analytic expressions for the profit functions.

First consider the case of fixed replenishment time $T$. Let $(n_1(t), n_2(t))$ denote the inventory levels at time $t$. The stochastic process $\{(n_1(t), n_2(t)), \ t\geq 0\}$ is a continuous-time Markov process, however it is not stationary because the time parameter $t$ affects the time since the last replenishment. On the other hand, since the process regenerates at replenishment epochs and we are interested in the probabilistic behavior during a single cycle, it is sufficient to consider the probabilistic behavior of the system during the interval $[0,T]$ assuming initial state $(Q_1, Q_2)$. During this interval the inventory evolves according to a continuous-time Markov chain with state space $S=\{ (i_1,i_2):i_1 = 0,1,\ldots,Q_1,\ i_2=0,1,\ldots,Q_2 \}$ and the transition rate diagram presented in Figure \ref{fixedTrates}. Note that state $(0,0)$ is considered an absorbing state in this model, since if the inventory is depleted during a cycle, the next state transition will occur after the end of the cycle.

\begin{figure}%
\begin{center}
\tiny{\xymatrix{*+<2pt>[F-:<20pt>]{Q_1,Q_2}\ar@/^1pc/[r]^{\lambda_{2}}\ar@/^1pc/[d]^>>>>>>{\lambda_1}&
*+<2pt>[F-:<20pt>]{Q_1,Q_2-1}\ar@/^1pc/[r]^{\lambda_{2}}\ar@/^1pc/[d]^>>>>>>{\lambda_{1}}&
*+<2pt>[F-:<20pt>]{Q_1,Q_2-2}\ar@/^1pc/[d]^>>>>>>{\lambda_{1}}&...&
*+<2pt>[F-:<20pt>]{Q_1,1}\ar@/^1pc/[r]^{\lambda_{2}}\ar@/^1pc/[d]^>>>>>>{\lambda_{1}}&
*+<2pt>[F-:<20pt>]{Q_1,0}\ar@/^1pc/[d]^>>>>>>{s_1}\\
*+<2pt>[F-:<20pt>]{Q_1-1,Q_2}\ar@/^1pc/[r]^{\lambda_{2}}\ar@/^1pc/[d]^>>>>>>{\lambda_1}&
*+<2pt>[F-:<20pt>]{Q_1-1,Q_2-1}\ar@/^1pc/[r]^{\lambda_{2}}\ar@/^1pc/[d]^>>>>>>{\lambda_{1}}&
*+<2pt>[F-:<20pt>]{Q_1-1,Q_2-2}\ar@/^1pc/[d]^>>>>>>{\lambda_{1}}&...&
*+<2pt>[F-:<20pt>]{Q_1-1,1}\ar@/^1pc/[r]^{\lambda_{2}}\ar@/^1pc/[d]^>>>>>>{\lambda_{1}}&
*+<2pt>[F-:<20pt>]{Q_1-1,0}\ar@/^1pc/[d]^>>>>>>{s_1}\\
*+<2pt>[F-:<20pt>]{Q_1-2,Q_2}\ar@/^1pc/[r]^{\lambda_{2}}&
*+<2pt>[F-:<20pt>]{Q_1-2,Q_2-1}\ar@/^1pc/[r]^{\lambda_{2}}&
*+<2pt>[F-:<20pt>]{Q_1-2,Q_2-2}&...&
*+<2pt>[F-:<20pt>]{Q_1-2,1}\ar@/^1pc/[r]^{\lambda_{2}}&
*+<2pt>[F-:<20pt>]{Q_1-2,0}\\
\vdots&\vdots&\vdots&&\vdots&\vdots\\
*+<2pt>[F-:<20pt>]{1,Q_2}\ar@/^1pc/[r]^{\lambda_{2}}\ar@/^1pc/[d]^>>>>>>{\lambda_1}&
*+<2pt>[F-:<20pt>]{1,Q_2-1}\ar@/^1pc/[r]^{\lambda_{2}}\ar@/^1pc/[d]^>>>>>>{\lambda_{1}}&
*+<2pt>[F-:<20pt>]{1,Q_2-2}\ar@/^1pc/[d]^>>>>>>{\lambda_{1}}&...&
*+<2pt>[F-:<20pt>]{1,1}\ar@/^1pc/[r]^<<<<<<{\lambda_{2}}\ar@/^1pc/[d]^>>>>>>{\lambda_{1}}&
*+<2pt>[F-:<20pt>]{1,0}\ar@/^1pc/[d]^>>>>>>{s_1}\\
*+<2pt>[F-:<20pt>]{0,Q_2}\ar@/^1pc/[r]^<<<<<<<{s_2}&
*+<2pt>[F-:<20pt>]{0,Q_2-1}\ar@/^1pc/[r]^<<<<<<<{s_2}&
*+<2pt>[F-:<20pt>]{0,Q_2-2}&...&
*+<2pt>[F-:<20pt>]{0,1}\ar@/^1pc/[r]^<<<<<<<{s_2}&
*+<2pt>[F-:<20pt>]{0,0}}}
\vspace{0.3cm}
\scriptsize{}
\normalsize
\end{center}
\caption{Transition Rate Diagram Under Fixed Replenishment Time}%
\label{fixedTrates}%
\end{figure}
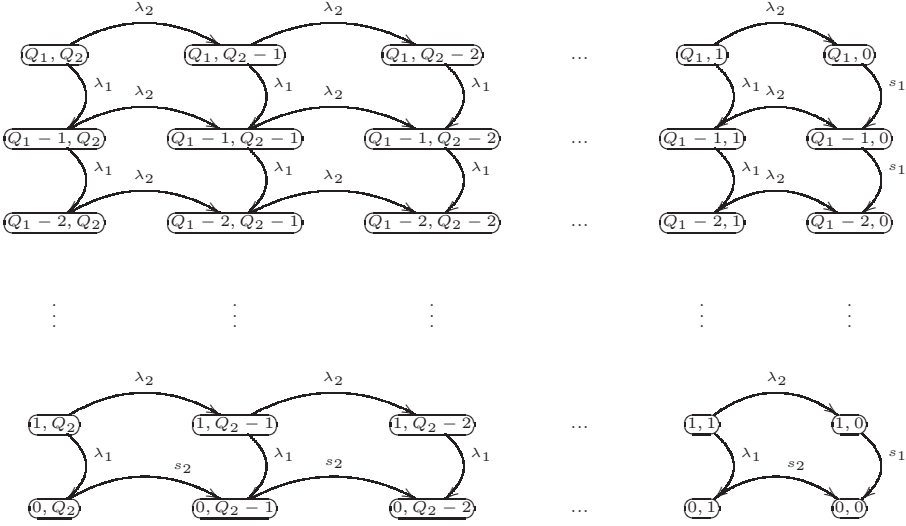

Under this model, the inventory levels at the end of a cycle correspond to the terminal state $n_1(T), n_2(T)$, thus, in order to evaluate the profit function in \eqref{profdef1}, we need to identify the transient distribution at time $T$. The explicit formula and the analysis of the profit function are derived in Section \ref{modelfixedT}.

Under the second approach, the duration of the period is exponentially distributed with parameter $\mu$. Let the inventory state at time $t$ be denoted by $(m_1(t),m_2(t))$. Then the stochastic process $\{(m_1(t),m_2(t)),t \geq 0\}$ is a continuous--time Markov chain with state space $S=\{ (i_1,i_2):i_1 \in\{ 0,1,\ldots,Q_1 \},i_2\in\{0,1,\ldots,Q_2 \} \}$. The transition rate diagram is identical to that in Figure \ref{fixedTrates}, with the addition of a transition rate equal to $\mu$ from each state to state $(Q_1, Q_2)$.
It is easy to see that the Markov chain in this model is positive recurrent, thus there exists a limiting distribution as $t\to \infty$. Furthermore, since the replenishments arrive according to a Poisson process independent of demand arrivals, it follows from the PASTA property that for large $t$ the probability distribution of the inventory level at a replenishment epoch is the limiting distribution of the Markov chain. Therefore, the value of $E_{Q_1, Q_2} (m_i), i=1,2$ in \eqref{profdef2} can be computed from the limiting distribution. The explicit formula and the analysis of the profit function are also derived in Section \ref{modelexpo}.

\section{Fixed Replenishment Time}\label{modelfixedT}

In this section we compute the transient distribution of the Markov chain $\{(n_1(t),n_2(t)):t\in[0,T]\}$ which represents the inventory levels during a cycle of fixed length $T$.  This allows to derive an analytic expression for the expected profit per period as a function of $Q_1$ and $Q_2$. Although this expression is complicated, it lends itself to numerical computations. Furthermore, we prove that the profit function is submodular, which allows for a simplification in the computation of the optimal order quantities.

We first consider the transient distribution. For notational simplicity we suppress the dependence on $(Q_1, Q_2)$ and define
\begin{equation}
p_{(j_1,j_2)}(t)=P(n_1(t)=j_1,n_2(t)=j_2),
j_1=0,\ldots,Q_1 , j_2= 0,\ldots,Q_2, t\in[0,T],
\nonumber
\end{equation}
as the transient probability function of the inventory process, given initial state  $n_1(0)=Q_1, n_2(0)=Q_2$.

To compute the transient distribution we apply uniformization and define an equivalent Markov chain where all transition times are exponentially distributed with a common rate and fictitious transitions back to the same state are allowed.
The definition of the uniformized process as well as the detailed derivations are included in the appendix. 

Let $s=\lambda_1+\lambda_2, s_1=\lambda_1+\lambda_2 p_{21}, s_2=\lambda_2+\lambda_1 p_{21}$ denote the total sales rate when both products, only product 1, or only product 2 are available, respectively. The explicit form of the transient probability distribution is derived in Theorem \ref{prop:transient_distr}. The proof is in the appendix. 

\begin{thm}\label{prop:transient_distr}
The transient probability function $p_{(j_1,j_2)}(t)$,  is equal to:
\small
\begin{eqnarray}
p_{(j_1,j_2)}(T)&=&e^{-st}\cdot\frac{(st)^{Q_1+Q_2-j_1-j_2}}{(Q_1+Q_2-j_1-j_2)!}\cdot\left(
       \begin{array}{c}
          Q_1+Q_2-j_1-j_2 \\
          Q_1-j_1 \\
       \end{array}
       \right)\cdot\nonumber\\
&&\left(\frac{\lambda_1}{s}\right)^{Q_1-j_1}\cdot\left(\frac{\lambda_2}{s}\right)^{Q_2-j_2}, \ j_1,j_2>0, \label{3.13}\\
 & & \nonumber \\
p_{(j_1,0)}(t)&=&\sum_{k=Q_1+Q_2-j_1}^{\infty}e^{-st}\cdot\frac{(st)^{k}}{k!}
                             \sum_{l=Q_2}^{Q_2+Q_1-j_1}\left(
                                                                                                   \begin{array}{c}
                                                                                                     l-1 \\
                                                                                                     Q_2-1 \\
                                                                                                   \end{array}
         \right) \cdot \nonumber\\
&&\left(\frac{\lambda_2}{s}\right)^{Q_2}\cdot\left(\frac{\lambda_1}{s}\right)^{l-Q_2} \cdot \left(
                                                                                              \begin{array}{c}
                                                                                                k-l \\
                                                                                                Q_1+Q_2-j_1-l \\
                                                                                              \end{array}                                                                               \right)\cdot\nonumber\\
&&\left(\frac{s_1}{s}\right)^{Q_1+Q_2-j_1-l}\cdot\left(\frac{s-s_1}{s}\right)^{k-Q_1-Q_2+j_1}, \ j_1>0, \label{3.14}\\
 & & \nonumber \\
p_{(0,j_2)}(t)&=&\sum_{k=Q_1+Q_2-j_2}^{\infty}e^{-st}\cdot\frac{(st)^{k}}{k!}
                             \sum_{l=Q_2}^{Q_2+Q_1-j_2}\left(
                                                                                                   \begin{array}{c}
                                                                                                     l-1 \\
                                                                                                     Q_1-1 \\
                                                                                                   \end{array}
                                                                                                 \right)\cdot\nonumber\\
&&\left(\frac{\lambda_1}{s}\right)^{Q_1}\cdot\left(\frac{\lambda_2}{s}\right)^{l-Q_1} \cdot \left(
                                                                                              \begin{array}{c}
                                                                                                k-l \\
                                                                                                Q_1+Q_2-j_2-l \\
                                                                                              \end{array}
                                                                                            \right)\cdot\nonumber\\
&&\left(\frac{s_2}{s}\right)^{Q_1+Q_2-j_2-l}\cdot\left(\frac{s-s_2}{s}\right)^{k-Q_1-Q_2+j_2}, \ j_2>0, \label{3.15}\\
 & & \nonumber \\
p_{(0,0)}(t)&=&1-\sum_{(j_1,j_2) \neq (0,0)}p_{(j_1,j_2)}(t). \label{3.16}
\end{eqnarray}
\normalsize
\end{thm}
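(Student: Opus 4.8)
The plan is to establish the result by \emph{uniformization} followed by a lattice-path counting argument, treating the four cases \eqref{3.13}--\eqref{3.16} according to which products remain in stock at time $T$.

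First I would uniformize the chain at rate $s=\lambda_1+\lambda_2$, the maximal total transition rate in Figure~\ref{fixedTrates} (attained at every interior state). In the resulting discrete-time chain $\{X_k\}$ an interior step is of type~1 with probability $\lambda_1/s$ and of type~2 with probability $\lambda_2/s$; at a boundary state $(i_1,0)$ with $i_1>0$ the chain moves to $(i_1-1,0)$ with probability $s_1/s$ and performs a fictitious self-transition with probability $(s-s_1)/s$; symmetrically at $(0,i_2)$ the probabilities are $s_2/s$ and $(s-s_2)/s$; and $(0,0)$ is absorbing. Subordinating to a Poisson$(st)$ number of steps gives
\begin{equation}
p_{(j_1,j_2)}(t)=\sum_{k\ge 0}e^{-st}\frac{(st)^k}{k!}\,P\big(X_k=(j_1,j_2)\mid X_0=(Q_1,Q_2)\big),
\nonumber
\end{equation}
so that everything reduces to computing the $k$-step transition probabilities of $\{X_k\}$.

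The counting exploits the fact that inventory is non-increasing. Hence a terminal state with $j_1,j_2>0$ is reachable only by an interior path, and a terminal state $(j_1,0)$ with $j_1>0$ is reachable only by a path that first exhausts product~2 while product~1 stays positive and then depletes product~1 along the bottom boundary. For \eqref{3.13} this immediately forces exactly $Q_1-j_1$ type-1 and $Q_2-j_2$ type-2 steps and no self-transitions, so only $k=Q_1+Q_2-j_1-j_2$ contributes; the $\binom{Q_1+Q_2-j_1-j_2}{Q_1-j_1}$ orderings each carry probability $(\lambda_1/s)^{Q_1-j_1}(\lambda_2/s)^{Q_2-j_2}$, yielding \eqref{3.13}. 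For \eqref{3.14} I would split each admissible path at the step $l$ at which product~2 is first depleted: this step is necessarily the $Q_2$-th type-2 event, so the preceding $l-1$ steps contain $Q_2-1$ type-2 and $l-Q_2$ type-1 interior steps, contributing $\binom{l-1}{Q_2-1}(\lambda_2/s)^{Q_2}(\lambda_1/s)^{l-Q_2}$; the remaining $k-l$ steps lie on the bottom boundary and must contain exactly $Q_1+Q_2-j_1-l$ genuine depletions (probability $s_1/s$ each) and $k-Q_1-Q_2+j_1$ self-transitions (probability $(s-s_1)/s$ each), contributing $\binom{k-l}{Q_1+Q_2-j_1-l}(s_1/s)^{Q_1+Q_2-j_1-l}\big((s-s_1)/s\big)^{k-Q_1-Q_2+j_1}$. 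Summing over $l$ and $k$ and weighting by the Poisson probabilities gives \eqref{3.14}; \eqref{3.15} follows by interchanging the two products, and \eqref{3.16} by complementation.

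The main obstacle, and the step requiring the most care, is pinning down the summation ranges and verifying that the two-phase decomposition neither omits nor double-counts paths. Concretely, I would check that the upper limit $l\le Q_2+Q_1-j_1$ is exactly the condition that product~1 is not depleted before product~2 (equivalently, that its level $Q_1+Q_2-l$ after phase~1 is at least $j_1$), that the lower limit $k\ge Q_1+Q_2-j_1$ records the total number $Q_1+Q_2-j_1$ of genuine depletions, and that, because the prescribed count of boundary depletions leaves product~1 at level $j_1\ge 1$ throughout phase~2, the absorbing state $(0,0)$ is never entered, so the constant boundary rate $s_1$ legitimately justifies the binomial split of the phase-2 steps. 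Uniqueness of the first-depletion index $l$ then guarantees that distinct pairs $(l,k)$ index disjoint path families, which rules out double counting.
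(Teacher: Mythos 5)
Your proposal is correct and follows essentially the same route as the paper: uniformization at rate $s$, reduction to the $k$-step transition probabilities of the discrete chain started at $(Q_1,Q_2)$, and a lattice-path count that handles interior terminal states directly and boundary states by conditioning on the first-passage step $l$ at which one product is depleted. The paper phrases the uniformization through the explicit block generator and transition matrices, but the combinatorial decomposition, the binomial coefficients, the step probabilities, and the summation ranges you identify all coincide with its argument.
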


Setting $t=T$ in the above expressions, we obtain the joint distribution of the inventory levels at the end of a replenishment cycle. Therefore the expected profit per unit time $\pi^{(1)}(Q_1, Q_2)$ can now be computed from (\ref{profdef1}) with
\begin{equation}
E_{Q_1, Q_2} (n_1)=\sum_{j_1=0}^{Q_1}j_1\sum_{j_2=0}^{Q_2}p_{(j_1,j_2)}(T)\nonumber
\end{equation}
and
\begin{equation}
E_{Q_1, Q_2} (n_1)=\sum_{j_2=0}^{Q_2}j_2\sum_{j_1=0}^{Q_1}p_{(j_1,j_2)}(T).\nonumber
\end{equation}

Although the resulting expression for the profit function is complicated and not immediately amenable to showing analytical properties, it can be used to  determine the optimal stocking policy numerically. Several numerical studies which lead to interesting managerial insights are performed in Section \ref{sec:numerical-analysis}.

In the next theorem we establish that the  profit function is submodular. This property is useful both theoretically and computationally, since it implies that the the optimal order quantity for either product while keeping the order quantity of the other product fixed is nonincreasing in the order quantity of the other product. This is intuitive, since by increasing the stocking level of one product, the shortage risk of the other one is also alleviated, because of the substitution effect, thus making high stocking levels less necessary.

\begin{thm} \label{thm:submodularity_fixed}
The expected profit $\pi^{(1)}(Q_1,Q_2)$ is submodular in $Q_1, Q_2$.
\end{thm}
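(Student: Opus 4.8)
My plan is to reduce the claim to a pathwise (sample-path) monotonicity property of the ending inventory and then invoke a standard composition rule for supermodular functions. Since submodularity is preserved under sums and the affine part $(r_1-c_1)Q_1+(r_2-c_2)Q_2$ is modular (its discrete cross-difference vanishes), and since $r_1+h_1>0$ and $r_2+h_2>0$, it suffices by \eqref{profdef1} to show that each map $(Q_1,Q_2)\mapsto E_{Q_1,Q_2}(n_1)$ and $(Q_1,Q_2)\mapsto E_{Q_1,Q_2}(n_2)$ is \emph{supermodular} on the integer lattice; then each $-(r_i+h_i)E(n_i)$ is submodular and so is $\pi^{(1)}$. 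Because supermodularity is preserved under nonnegative averaging over a common probability space, it is enough to prove that $n_1$ and $n_2$ are supermodular \emph{pathwise}.

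First I would place all order quantities on a common probability space by fixing one realization of the two Poisson streams on $[0,T]$ together with, for every arrival, an independent substitution coin (Bernoulli $p_{12}$ for a type-$1$ arrival, Bernoulli $p_{21}$ for a type-$2$ arrival). Crucially the arrival epochs, types and coins do not depend on $(Q_1,Q_2)$, so the \emph{same} realization drives every starting state. Relative to such a realization the terminal inventory is a deterministic function of the initial vector $x=(Q_1,Q_2)$, obtained by composing the one-step transition maps of the successive arrivals, $\Phi=\phi_K\circ\cdots\circ\phi_1$. Each $\phi_k:\mathbb{Z}_{\ge0}^2\to\mathbb{Z}_{\ge0}^2$ takes one of four forms; for a type-$1$ arrival that would substitute it is
\begin{equation}
\phi(x_1,x_2)=\bigl((x_1-1)^+,\; x_2-\mathbf 1\{x_1=0,\,x_2\ge 1\}\bigr),\nonumber
\end{equation}
for a type-$1$ arrival that would not substitute it is $((x_1-1)^+,x_2)$, and the type-$2$ maps are the mirror images.

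The heart of the argument is that every $\phi_k$ is \emph{isotone} and has \emph{supermodular components}. The first coordinate $(x_1-1)^+$ depends on $x_1$ alone, hence is modular and nondecreasing. For the second coordinate I would write $\phi_2=x_2-I(x)$ with $I(x)=\mathbf 1\{x_1=0\}\,\mathbf 1\{x_2\ge 1\}$ and check that $I$ is submodular: its only nonzero discrete cross-difference equals $I(x)-I(x+e_2)\le 0$ (because $I(x+e_1)=I(x+e_1+e_2)=0$), so $\phi_2=x_2-I$ is supermodular, and a direct check gives monotonicity in each argument; the remaining maps follow by symmetry. I would then run a backward induction on $k$ using the classical composition rule (Topkis): if $h:\mathbb{Z}^2\to\mathbb{R}$ is supermodular and nondecreasing and $g_1,g_2$ are supermodular and isotone, then $h(g_1,g_2)$ is supermodular. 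Writing $F^{(k)}=\phi_K\circ\cdots\circ\phi_k$, the identity $F^{(K+1)}$ has supermodular (indeed modular) nondecreasing coordinates, and $F^{(k)}_i=F^{(k+1)}_i\circ\phi_k$ inherits supermodularity from the rule while remaining isotone as a composition of isotone maps. Hence each coordinate of $\Phi$, namely $n_1$ and $n_2$, is supermodular and nondecreasing pathwise; averaging over the realization yields supermodularity of $E(n_1)$ and $E(n_2)$, and the theorem follows.

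The main obstacle is twofold and both parts live in the inductive step. First, the composition rule must be applied with care: it genuinely needs the outer function to be nondecreasing, not merely supermodular, which is why I would carry isotonicity of $F^{(k+1)}$ as part of the induction hypothesis; I would either cite Topkis or include the short proof, which reduces the four-point inequality to increasing differences of $h$ after replacing $g(a),g(b)$ by their join and meet. Second, among the one-step maps the only nontrivial verification is the substitution coordinate $x_2-I(x)$, since the indicator $I$ couples the two stock levels, whereas everything else is modular. Once these points are settled the rest is bookkeeping, and the pathwise conclusion is consistent with the economic intuition noted before the statement: a larger stock of one product shields the other from substitution demand, so the marginal benefit of stocking either product decreases in the other's level.
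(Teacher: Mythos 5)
Your overall strategy coincides with the paper's: both reduce submodularity of $\pi^{(1)}$ to pathwise supermodularity of the terminal inventories $n_1,n_2$ as functions of the initial state, using a coupling in which the arrival streams and substitution coins are common to all starting states, and then average. Your verification that each one-step map is isotone with supermodular coordinates is also correct. The gap is in the inductive step. The composition rule you invoke --- ``$h$ supermodular and nondecreasing, $g_1,g_2$ supermodular and isotone $\Rightarrow h(g_1,g_2)$ supermodular'' --- is false: take $h(y_1,y_2)=\min(y_1,1)$ (nondecreasing, with identically zero cross-differences, hence supermodular), $g_1(x)=x_1+x_2$ and $g_2\equiv 0$ (both modular and isotone); then $h(g_1(x),g_2(x))=\min(x_1+x_2,1)$ has cross-difference $-1$ at the origin. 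Your sketched proof breaks exactly where you replace $g(a),g(b)$ by their join and meet: isotonicity gives $g(a\vee b)\geq g(a)\vee g(b)$ but also $g(a\wedge b)\leq g(a)\wedge g(b)$, so monotonicity of $h$ helps on the join term and hurts on the meet term, and supermodularity of the $g_i$ alone cannot repair this without some componentwise convexity of $h$ (this is why Topkis's actual composition lemmas carry a convexity hypothesis on the outer function).

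The failure is not merely in the general lemma; it surfaces in your specific induction. Let $\phi$ be a substituting type-$1$ arrival and consider the unit square $(0,1),(1,1),(0,2),(1,2)$. Then $\phi$ maps these points to $(0,0),(0,1),(0,1),(0,2)$, so the four-point inequality for $F^{(k+1)}\circ\phi$ becomes $F^{(k+1)}(0,2)-2F^{(k+1)}(0,1)+F^{(k+1)}(0,0)\geq 0$, i.e.\ discrete convexity of $F^{(k+1)}$ in its second argument along $x_1=0$ --- a property that supermodularity plus isotonicity do not imply. To salvage the approach you would have to strengthen the inductive hypothesis to include directional (componentwise) convexity and show that this enlarged set of properties is closed under composition with every $\phi_k$, which is essentially the theory of multimodular/directionally convex functions and a substantially heavier argument than advertised. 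The paper avoids this entirely: it proves the pathwise inequality $n_i^{(Q_1+1,Q_2+1)}(t)-n_i^{(Q_1+1,Q_2)}(t)\geq n_i^{(Q_1,Q_2+1)}(t)-n_i^{(Q_1,Q_2)}(t)$ directly, by running the four coupled systems forward and doing a case analysis on the first event at which a substitution distinguishes their states, tracking the exact joint configuration thereafter. You should either adopt that direct four-system argument or supply the missing convexity machinery; as written, the inductive step does not close.
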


\begin{proof}
We must show the inequality
\small
\begin{equation}
\pi^{(1)}(Q_1+1,Q_2+1)-\pi^{(1)}(Q_1+1,Q_2)-\pi^{(1)}(Q_1,Q_2+1)+\pi^{(1)}(Q_1,Q_2)\leq 0.
\label{3.20}
\end{equation}
\normalsize

Since $r_i,h_i\geq0$, $i=1,2$, it follows from  (\ref{profdef1}) that  it suffices to show the opposite inequality for the expected unsold quantities, i.e.,
\small
\begin{equation}
E_{(Q_1+1,Q_2+1)}(n_i)-E_{(Q_1+1,Q_2)}(n_i)\geq
E_{(Q_1,Q_2+1)}(n_i)-E_{Q_1, Q_2} (n_i), i=1,2.
\label{3.21}
\end{equation}
\normalsize

We will prove a sample-path-wise version of (\ref{3.21}), using coupling arguments
\begin{equation}
n_{i}^{(Q_1+1,Q_2+1)}(t) - n_{i}^{(Q_1+1,Q_2)}(t) \geq n_{i}^{(Q_1,Q_2+1)}(t) - n_i^{(Q_1,Q_2)}(t), i=1,2
\label{3.21a}
\end{equation}

We consider four versions of the process $\{n_1(t), n_2(t), t\in [0,T]\}$ referred to as systems, which run in parallel and are coupled as follows: At $t=0$ system A is in state $(Q_1+1,Q_2+1)$, system B in state $(Q_1+1,Q_2)$, system C in state $(Q_1,Q_2+1)$ and system D in state $(Q_1,Q_2)$. These four systems are coupled via demand arrivals and substitutions, i.e., we assume that the same stream of demand arrivals and substitution requests occurs in all four.
For each coupled system we denote the inventory level with an exponent corresponding to the initial state, e.g. the inventory levels under system A are denoted by $n_{i}^{(Q_1+1,Q_2+1)}(t)$, etc.

Consider product 1 first. We can easily see that for every possible event sequence the inventory levels of product 1 in systems A and B are always identical, and the same is true for the levels in systems C and D (i.e., $n_{1}^{(Q_1+1,Q_2+1)}(t)=n_{1}^{(Q_1+1,Q_2)}(t)$ and $n_{1}^{(Q_1,Q_2+1)}(t)=n_1^{(Q_1,Q_2)}(t)$, thus (\ref{3.21a}) holds for $i=1$) for all $t$ until either $t=T$ or one of the following two events occurs:

\noindent {\bf (i)}  Assume that at time instant $t_0 < T$  system A is in state $(i,1),\ i > 1$ and a demand for product 2 occurs which would be willing to substitute product 2 with product 1. It follows that in system A no substitution has occurred thus far and  the total demand up to this time instant for all systems was $X_1=Q_1-i, X_2=Q_2-1$. Therefore, no substitution has occurred in systems B, C and D either, and they are in states $(i,0)$, $(i-1,1)$ and $(i-1,0)$, respectively).

At time $t_0$ in systems A and B a unit of product 2 is sold, while in C and D a unit of product 1 is sold due to substitution, thus the new states in A, B, C, D  are $(i,0), (i-1,0), (i-1, 0)$ and $(i-2,0)$, respectively. This relationship between states, i.e., $n_{1}^{(Q_1+1,Q_2+1)}(t)=1+n_{1}^{(Q_1+1,Q_2)}(t)$ and $n_{1}^{(Q_1+1,Q_2)}(t)=n_{1}^{(Q_1,Q_2+1)}(t)=1+n_1^{(Q_1,Q_2)}(t)$ will hold for all $t>t_0$, until either $t=T$ or until systems A, B, C and D reach states $(2,0)$, $(1,0)$, $(1,0)$ and $(0,0)$, respectively. Then, if a demand of product 1 comes or a demand of product 2 comes and substitutes, the systems will go in states $(1,0)$, $(0,0)$, $(0,0)$ and $(0,0)$, respectively. Subsequently, the four systems will stay in these states until a demand of product 1 comes or a demand
of product 2 comes and substitutes, in which case  all will be absorbed in state $(0,0)$ until $t=T$ and a replenishment occurs.
It is easy to see that (\ref{3.21a}) holds for $i=1$ during the entire interval $[t_0,T]$.

\noindent {\bf (ii)} Now assume that at time $t_0<t$  system A is in state $(1,1)$ and a demand of product 2 occurs which (for systems B and D) substitutes product 2 with product 1.
Similarly it follows that at $t_0$  systems B, C and D are in states $(1,0)$, $(0,1)$ and $(0,0)$, respectively. At this time instant  the four
systems will directly reach states $(1,0)$, $(0,0)$, $(0,0)$ and $(0,0)$ and continue from that point on as described above.
It follows that (\ref{3.21a}) holds for $i=1$ during $[t_0,T]$ for this case as well.

Using a symmetric line of arguments it can be shown that (\ref{3.21a}) also holds for $i=2$ during $[0,T]$. Equation (\ref{3.21}) follows from equation (\ref{3.21a}) and the proof is complete.
\end{proof}

\section{Random Replenishment Time} \label{modelexpo}

In this section we derive the stationary distribution of the Markov chain $\{(m_1(t),m_2(t)),t\in\mathbb{R}\}$ which represents the inventory levels when the duration of a cycle is exponentially distributed. In analogy with the fixed replenishment time case, we also obtain an explicit expression for the expected profit per period and prove that it is a submodular function of $Q_1, Q_2$.

We first consider the stationary distribution in Theorem \ref{stat-distr-random} below. The proof is based on deriving appropriate recursive forms of the steady state equations and is included in the appendix.

\begin{thm}\label{stat-distr-random}
  The stationary distribution
  $(\pi_{(i_1,i_2)}^{(Q_1,Q_2)}:(i_1,i_2)\in S)$ of the continuous
  time Markov chain $\{(m_1(t),m_2(t)),t\in\mathbb{R}\}$ is given by
  the formulas:

  \small
  \begin{equation}
    \pi_{(Q_1-i_1,Q_2-i_2)}^{(Q_1,Q_2)}=\left(
      \begin{array}{c}
        i_1+i_2 \\
        i_1 \\
      \end{array}
    \right)
    q_{1}^{i_1}q_{2}^{i_2}q_{3}, \ i_1=0,1,\ldots,Q_1-1, \ i_2=0,1,\ldots,Q_2-1,\label{4.1}
  \end{equation}
  \normalsize

  \small
  \begin{equation}
    \pi_{(Q_1-i_1,0)}^{(Q_1,Q_2)}=B_2q_{2}^{Q_2-1}q_{3}\sum_{k=0}^{i_1}A_{1}^{i_1-k}q_{1}^{k}\left(
      \begin{array}{c}
        Q_2+k-1 \\
        k \\
      \end{array}
    \right)
    , \ i_1=0,1,\ldots,Q_1-1,\label{4.2}
  \end{equation}
  \normalsize

  \small
  \begin{equation}
    \pi_{(0,Q_2-i_2)}^{(Q_1,Q_2)}=B_1q_{1}^{Q_1-1}q_{3}\sum_{k=0}^{i_2}A_{2}^{i_2-k}q_{2}^{k}\left(
      \begin{array}{c}
        Q_1+k-1 \\
        k \\
      \end{array}
    \right)
    , \ i_2=0,1,\ldots,Q_2-1,\label{4.3}
  \end{equation}
  \normalsize

\begin{eqnarray}
  \pi_{(0,0)}^{(Q_1,Q_2)}&=&\frac{s_1}{\mu}B_2q_{2}^{Q_2-1}q_{3}\sum_{k=0}^{Q_1-1}A_{1}^{Q_1-1-k}q_{1}^{k}\left(
    \begin{array}{c}
      Q_2+k-1 \\
      k \\
    \end{array}
  \right)\nonumber\\
  &&+\frac{s_2}{\mu}B_1q_{1}^{Q_1-1}q_{3}\sum_{k=0}^{Q_2-1}A_{2}^{Q_2-1-k}q_{2}^{k}\left(
    \begin{array}{c}
      Q_1+k-1 \\
      k \\
    \end{array}
  \right),\label{4.4}
\end{eqnarray}

where
\begin{equation}
  q_i=\frac{\lambda_i}{s+\mu}, \ i=1,2,\label{4.5}
\end{equation}
\begin{equation}
  q_3=\frac{\mu}{s+\mu},\label{4.6}
\end{equation}
\begin{equation}
  A_1=\frac{s_1}{s_1+\mu},\label{4.7}
\end{equation}
\begin{equation}
  A_2=\frac{s_2}{s_2+\mu},\label{4.8}
\end{equation}
\begin{equation}
  B_1=\frac{\lambda_1}{s_2+\mu},\label{4.9}
\end{equation}
and
\begin{equation}
  B_2=\frac{\lambda_2}{s_1+\mu}.\label{4.10}
\end{equation}
\end{thm}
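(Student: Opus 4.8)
The plan is to verify that the formulas \eqref{4.1}--\eqref{4.4} solve the global balance equations of the chain, obtaining them by a layered recursion rather than by guessing a closed form. First I would write the balance (rate in $=$ rate out) equation for each type of state, reading the rates off the transition diagram of Figure \ref{fixedTrates} augmented with the replenishment arcs of rate $\mu$ into $(Q_1,Q_2)$. Four cases arise: the interior states $(i_1,i_2)$ with $i_1,i_2\geq 1$, where the total outflow rate is $s+\mu$ and the only inflows come from $(i_1+1,i_2)$ at rate $\lambda_1$ and from $(i_1,i_2+1)$ at rate $\lambda_2$; the edge $i_2=0$, $i_1\geq 1$, where the outflow rate is $s_1+\mu$ and inflow comes from $(i_1+1,0)$ at rate $s_1$ and from $(i_1,1)$ at rate $\lambda_2$; the symmetric edge $i_1=0$; and the corner $(0,0)$, whose only outflow is the replenishment rate $\mu$ and whose inflows are $s_1\pi_{(1,0)}+s_2\pi_{(0,1)}$. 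Placing the substitution rates $s_1,s_2$ correctly in the edge and corner equations is where the model structure actually enters.

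Next I would solve these equations outward from the full state. Setting $\pi_{(Q_1,Q_2)}=q_3$, the interior equation $(s+\mu)\pi_{(i_1,i_2)}=\lambda_1\pi_{(i_1+1,i_2)}+\lambda_2\pi_{(i_1,i_2+1)}$ becomes, after dividing by $s+\mu$ and using $q_i=\lambda_i/(s+\mu)$, a recursion in the total depletion $(Q_1-i_1)+(Q_2-i_2)$. I would prove \eqref{4.1} by induction on this depletion level, the inductive step reducing to Pascal's identity $\binom{a_1+a_2-1}{a_1-1}+\binom{a_1+a_2-1}{a_1}=\binom{a_1+a_2}{a_1}$. The same equation simultaneously handles the top row and right column, where one of the two inflow terms is simply absent.

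With the interior known, the two edges decouple into one-dimensional inhomogeneous recursions. Along $i_2=0$, writing $P_a=\pi_{(Q_1-a,0)}$ and using $A_1=s_1/(s_1+\mu)$, $B_2=\lambda_2/(s_1+\mu)$, the edge balance reads $P_a=A_1P_{a-1}+B_2 q_2^{Q_2-1}q_3\binom{Q_2+a-1}{a}q_1^{a}$, whose forcing term is exactly the interior value $\pi_{(Q_1-a,1)}$ supplied by \eqref{4.1}. Unrolling this first-order recurrence from the base value $P_0=B_2q_2^{Q_2-1}q_3$ produces the geometric--binomial convolution in \eqref{4.2}; \eqref{4.3} follows by the symmetric argument on the edge $i_1=0$, and \eqref{4.4} is then immediate by inserting $\pi_{(1,0)}$ and $\pi_{(0,1)}$ into $\mu\pi_{(0,0)}=s_1\pi_{(1,0)}+s_2\pi_{(0,1)}$. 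The constants \eqref{4.5}--\eqref{4.10} are precisely the coefficients generated by these steps.

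Finally, normalization does not require evaluating the awkward double sum $\sum_S\pi$. By construction the candidate vector satisfies the balance equation at every state other than $(Q_1,Q_2)$; since the generator has zero row sums, the balance equations are linearly dependent and sum to zero, so the single remaining equation at $(Q_1,Q_2)$, namely $(s+\mu)\pi_{(Q_1,Q_2)}=\mu\sum_S\pi$, holds automatically. Substituting $\pi_{(Q_1,Q_2)}=q_3=\mu/(s+\mu)$ then forces $\sum_S\pi=1$, so the constructed vector is already a probability distribution and, by the stated irreducibility and positive recurrence, is the unique stationary distribution. I expect the main obstacle to be the bookkeeping in solving the two edge recursions in closed form and matching the resulting convolutions to \eqref{4.2}--\eqref{4.3}; the interior induction, the corner equation, and the normalization step are routine once the balance equations are set up with the correct substitution rates.
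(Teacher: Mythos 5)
Your proposal is correct and follows essentially the same route as the paper's appendix proof: write out the full set of balance equations (the paper's \eqref{4.11}--\eqref{4.19}), pin down $\pi_{(Q_1,Q_2)}=q_3=\mu/(s+\mu)$, and solve outward --- the interior by induction on the depletion level via Pascal's identity, the two boundary rows as first-order inhomogeneous recurrences with constants $A_1,A_2,B_1,B_2$, and the corner by direct substitution into $\mu\pi_{(0,0)}=s_1\pi_{(1,0)}+s_2\pi_{(0,1)}$. The only (minor, and welcome) difference is organizational: the paper imposes the normalization \eqref{4.20} up front to reduce \eqref{4.11} to $(s+\mu)\pi_{(Q_1,Q_2)}=\mu$ and then leaves the recursions as ``straightforward algebra,'' whereas you fix $\pi_{(Q_1,Q_2)}=q_3$ first and deduce normalization a posteriori from the linear dependence of the balance equations, which actually spells out the steps the paper omits.
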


Based on the stationary distribution, the expected profit per unit time $\pi^{(2)}(Q_1, Q_2)$ can now be computed from (\ref{profdef2}) with

\begin{equation}
Em_1=\sum_{i_1=0}^{Q_1}(Q_1-i_1)\sum_{i_2=0}^{Q_2}\pi_{(Q_1-i_1,Q_2-i_2)}^{(Q_1,Q_2)},\nonumber
\end{equation}
and
\begin{equation}
Em_2=\sum_{i_2=0}^{Q_2}(Q_2-i_2)\sum_{i_1=0}^{Q_1}\pi_{(Q_1-i_1,Q_2-i_2)}^{(Q_1,Q_2)}.\nonumber
\end{equation}

Finally, we again prove the submodularity of the expected profit per period for the exponentially case in the following Theorem.

\begin{thm}
$\pi^{(2)}(Q_1,Q_2)$ is submodular.
\end{thm}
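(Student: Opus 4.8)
The plan is to reduce the claim to the fixed-time submodularity already proved in Theorem~\ref{thm:submodularity_fixed}, by exploiting the regenerative structure of the random-replenishment model rather than manipulating the closed-form stationary probabilities. First I would observe that, exactly as in the passage from (\ref{3.20}) to (\ref{3.21}), the linear terms $(r_i-c_i)Q_i$ in (\ref{profdef2}) are additively separable and hence vanish under the second cross-difference; since $\mu>0$ and $r_i+h_i\geq 0$, submodularity of $\pi^{(2)}$ is equivalent to the reversed (supermodularity) inequality for the expected unsold quantities,
\begin{equation}
E_{(Q_1+1,Q_2+1)}(m_i)-E_{(Q_1+1,Q_2)}(m_i)\geq E_{(Q_1,Q_2+1)}(m_i)-E_{Q_1,Q_2}(m_i),\quad i=1,2.
\nonumber
\end{equation}

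Next I would record the representation that links the two models. Within a single regeneration cycle the inventory evolves under exactly the same demand-and-substitution dynamics as the fixed-time chain $\{(n_1(t),n_2(t))\}$ started from $(Q_1,Q_2)$; the only change is that the cycle now terminates at the next replenishment epoch. Because replenishments form a Poisson process of rate $\mu$ independent of demand, the time until that epoch is an $\mathrm{Exp}(\mu)$ variable $\tau$ independent of the demand stream, and by the regenerative/PASTA argument already invoked in Section~\ref{model} the steady-state mean coincides with the expected end-of-cycle level,
\begin{equation}
E_{Q_1,Q_2}(m_i)=E\bigl[n_i^{(Q_1,Q_2)}(\tau)\bigr],\qquad \tau\sim\mathrm{Exp}(\mu)\ \text{independent of demand}.
\nonumber
\end{equation}

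With this in hand the argument of Theorem~\ref{thm:submodularity_fixed} transfers almost verbatim. I would run the same four coupled systems A, B, C, D from initial states $(Q_1+1,Q_2+1)$, $(Q_1+1,Q_2)$, $(Q_1,Q_2+1)$ and $(Q_1,Q_2)$ on a common demand-and-substitution stream, and in addition let all four share a single cycle-termination time $\tau$, which is legitimate since the replenishment clock is independent of both demand and state. The sample-path inequality (\ref{3.21a}) was established there for every $t$ and for both products; because the within-cycle dynamics do not reference the horizon, it holds pathwise for all $t\geq 0$, and evaluating it at the random time $t=\tau$ gives
\begin{equation}
n_i^{(Q_1+1,Q_2+1)}(\tau)-n_i^{(Q_1+1,Q_2)}(\tau)\geq n_i^{(Q_1,Q_2+1)}(\tau)-n_i^{(Q_1,Q_2)}(\tau),\quad i=1,2,
\nonumber
\end{equation}
on every sample path. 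Taking expectations over the demand stream and over $\tau$ (which are independent) and substituting the representation above yields the required inequality for $E(m_i)$, and hence submodularity of $\pi^{(2)}$.

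The main obstacle, and the only place where this proof does work beyond the fixed-time case, is justifying the identification $E_{Q_1,Q_2}(m_i)=E[n_i^{(Q_1,Q_2)}(\tau)]$: one must argue that the steady-state mean extracted from (\ref{4.1})--(\ref{4.10}) equals the expected inventory seen just before a replenishment, which is precisely where memorylessness of the exponential cycle and the independence of the replenishment process from demand are essential. Once that identification is secured, the independence of $\tau$ from the coupled demand stream is exactly what allows the pathwise inequality to survive integration, so no new coupling construction is required; the direct alternative of differencing the explicit stationary probabilities (\ref{4.1})--(\ref{4.10}) in $Q_1$ and $Q_2$ would be considerably more laborious and is best avoided.
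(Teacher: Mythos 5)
Your proposal is correct and follows essentially the same route as the paper: the paper also couples the four systems A, B, C, D in demand arrivals, substitution requests \emph{and} replenishment arrivals, lets $\tilde{T}$ denote the common first replenishment time, and invokes the sample-path inequality from Theorem~\ref{thm:submodularity_fixed} on $[0,\tilde{T}]$ before taking expectations. The only difference is that you spell out the PASTA-based identification of the stationary mean with the expected pre-replenishment inventory, which the paper states in Section~\ref{model} and uses implicitly here.
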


\begin{proof}
In order to prove that $\pi^{(2)}(Q_1,Q_2)$ is submodular we have to prove the inequality
\small
\begin{equation}
\pi^{(2)}(Q_1+1,Q_2+1)-\pi^{(2)}(Q_1+1,Q_2)-\pi^{(2)}(Q_1,Q_2+1)+\pi^{(2)}(Q_1,Q_2)\leq 0.\label{4.25}
\end{equation}
\normalsize

Since $r_i,h_i\geq0$, $i=1,2$, in order to prove \eqref{4.25} it suffices to show that
\small
\begin{equation}
Em_{i}^{(Q_1+1,Q_2+1)}-Em_{i}^{(Q_1+1,Q_2)}\geq
Em_{i}^{(Q_1,Q_2+1)}-Em_i^{(Q_1,Q_2)}, i=1,2,\label{4.26}
\end{equation}
\normalsize
and we will prove again with sample-path arguments the following equation
\small
\begin{equation}
m_{i}^{(Q_1+1,Q_2+1)}-m_{i}^{(Q_1+1,Q_2)}\geq
m_{i}^{(Q_1,Q_2+1)}-m_i^{(Q_1,Q_2)}, i=1,2.\label{4.27}
\end{equation}

We follow the same approach as in Theorem \ref{thm:submodularity_fixed}, i.e., we consider four systems A, B, C, D with initial states $(Q_1+1,Q_2+1)$, $(Q_1+1,Q_2)$, $(Q_1,Q_2+1)$ and $(Q_1,Q_2)$,  respectively. These are coupled in demand arrivals, substitution requests and replenishment arrivals. Let $\tilde{T}$ be the common first replenishment time. Using exactly the same sample path arguments as in Theorem 2, it can be shown that (\ref{4.27}) holds for $i=1,2$ and $0\leq t\leq \tilde{T}$. Thus, (\ref{4.26}) holds and the proof is complete.
\end{proof}

\section{Optimization Algorithm}

In this section  we construct an algorithm in order to determine the optimal quantities $Q_1$ and $Q_2$ that will be ordered at the beginning of each period. For the fixed replenishment case we must solve the following problem

\begin{equation}
\max_{\underset{a_1 Q_1+a_2 Q_2\leq C}{Q_1,Q_2}} \pi^{(1)}(Q_1,Q_2).\nonumber
\end{equation}

For fixed $Q_1$, denote by $Q_{2}^{*}(Q_1)$ the largest value of $Q_2$ at which the $\max_{Q_2\in\{0,\ldots,\left\lfloor\frac{C- a_1 Q_1}{a_2}\right\rfloor\}} \pi^{(1)}(Q_1,Q_2)$ occurs. Thus, 

\small
\begin{eqnarray}
 Q_{2}^{*}(Q_1)&=&\max\{Q_{2}^{0}\in\{0,\ldots,\left\lfloor\frac{C- a_1 Q_1}{a_2}\right\rfloor\}:\nonumber\\ &&\max_{Q_2\in\{0,\ldots,\left\lfloor\frac{C- a_1 Q_1}{a_2}\right\rfloor\}} \pi^{(1)}(Q_1,Q_2)=\pi^{(1)}(Q_1,Q_{2}^{0})\}\nonumber.
\end{eqnarray}
\normalsize

Therefore, $\max_{\underset{a_1 Q_1+a_2 Q_2\leq C}{Q_1,Q_2}} \pi^{(1)}(Q_1,Q_2)$ can be written as

\begin{eqnarray}
\max_{\underset{a_1 Q_1+a_2 Q_2\leq C}{Q_1,Q_2}} \pi^{(1)}(Q_1,Q_2)&=&\max_{Q_1\in\{0,\ldots,\left\lfloor\frac{C}{a_1}\right\rfloor\}}\{\max_{Q_2\in\{0,\ldots,\left\lfloor\frac{C- a_1 Q_1}{a_2}\right\rfloor\}}\pi^{(1)}(Q_1,Q_2)\}\nonumber\\
 &=& \max_{Q_1\in\{0,\ldots,\left\lfloor\frac{C}{a_1}\right\rfloor\}}\pi^{(1)}(Q_1,Q_{2}^{*}(Q_1)),\nonumber
\end{eqnarray}
and the problem takes the following form

\begin{equation}
\max_{Q_1\in\{0,\ldots,\left\lfloor\frac{C}{a_1}\right\rfloor\}}\pi^{(1)}(Q_1,Q_{2}^{*}(Q_1)).\nonumber
\end{equation}

Respectively, for exponentially distributed replenishment times, we must solve the problem

\begin{equation}
\max_{Q_1\in\{0,\ldots,\left\lfloor\frac{C}{a_1}\right\rfloor\}}\pi^{(2)}(Q_1,Q_{2}^{*}(Q_1)).\nonumber
\end{equation}

For both cases, from the submodularity of $\pi^{(1)}(Q_1,Q_2)$, $\pi^{(2)}(Q_1,Q_2)$ we immediately obtain the following Lemma.

\begin{lem}
The optimal order quantity $Q_{2}^{*}(Q_1)$ is decreasing in $Q_1$.
\end{lem}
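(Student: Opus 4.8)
The plan is to recognize this as a Topkis-type monotone-comparative-statics statement and to prove it directly by contradiction, using only the submodularity already established in Theorem \ref{thm:submodularity_fixed} (and its analogue for $\pi^{(2)}$). Throughout I would write $\pi$ for either profit function and let $U(Q_1)=\lfloor (C-a_1Q_1)/a_2\rfloor$ denote the upper end of the feasible range for $Q_2$. The key observation about the constraint is that $U$ is nonincreasing in $Q_1$, so the feasible ranges are nested downward as $Q_1$ grows; this is the feature of the constraint that cooperates with, rather than obstructs, the desired monotonicity.

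First I would restate submodularity in a convenient difference form. Inequality (\ref{3.20}) is equivalent to the statement that the row difference $g(Q_2):=\pi(Q_1',Q_2)-\pi(Q_1,Q_2)$ is nonincreasing in $Q_2$ for every $Q_1'>Q_1$, which one obtains by telescoping the one-step submodularity inequality across all integer rows between $Q_1$ and $Q_1'$. This single structural fact is all the argument needs.

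Next, fix $Q_1'>Q_1$, set $b=Q_2^{*}(Q_1)$ and $b'=Q_2^{*}(Q_1')$, and suppose for contradiction that $b'>b$. I would first dispatch feasibility: since $b'$ is feasible at $Q_1'$ we have $b'\le U(Q_1')\le U(Q_1)$, and because $b<b'$ both $b$ and $b'$ lie in the feasible ranges of both $Q_1$ and $Q_1'$. Because $b$ is the \emph{largest} maximizer at $Q_1$ and $b'>b$, the point $b'$ cannot be a maximizer at $Q_1$, which gives the strict inequality $\pi(Q_1,b')<\pi(Q_1,b)$; meanwhile $b'$ is a maximizer at $Q_1'$ and $b$ is feasible there, so $\pi(Q_1',b')\ge\pi(Q_1',b)$. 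Applying $g(b')\le g(b)$ and rearranging yields $\pi(Q_1',b')-\pi(Q_1',b)\le\pi(Q_1,b')-\pi(Q_1,b)<0$, i.e.\ $\pi(Q_1',b')<\pi(Q_1',b)$, contradicting the previous inequality. Hence $b'\le b$, that is, $Q_2^{*}(Q_1')\le Q_2^{*}(Q_1)$, and since $Q_1'>Q_1$ was arbitrary the claim follows.

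The one delicate point — and the step I would flag as the genuine content, rather than the constraint bookkeeping — is the use of the \emph{largest} maximizer to obtain the strict inequality $\pi(Q_1,b')<\pi(Q_1,b)$. Because $\pi$ is not known to be concave (or even unimodal) in $Q_2$, one cannot argue through a single sign change of the marginal differences, and if $b$ were merely \emph{some} maximizer the argument would only reproduce that $b'$ is again a maximizer at $Q_1$, not that it is no larger than $b$. Selecting the largest maximizer, as in the definition of $Q_2^{*}(Q_1)$, supplies exactly the strictness that converts the submodular difference inequality into a contradiction.
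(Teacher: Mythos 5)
Your proof is correct and follows essentially the same route as the paper's: telescope the one-step submodularity inequality to show that the difference $\pi(Q_1',\cdot)-\pi(Q_1,\cdot)$ is nonincreasing in $Q_2$, then compare maximizers across the nested feasible ranges. You are in fact slightly more careful than the paper on the tie-breaking point — the paper records only the weak inequality $\pi(1,Q_2')\le \pi(1,Q_2^{*}(0))$ for $Q_2'>Q_2^{*}(0)$, whereas your explicit use of the \emph{largest}-maximizer convention to obtain the strict inequality $\pi(Q_1,b')<\pi(Q_1,b)$ is exactly what rules out a larger maximizer reappearing at $Q_1'$ in case of ties.
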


Using the monotonicity of  $Q_{2}^{*}(Q_1)$, we can construct an improved search algorithm to determine the optimal quantities $Q_1$ and $Q_2$. The algorithm has the same form for both profit functions $\pi^{(i)}, i=1,2$. 

\underline{Algorithm}
\begin{itemize}
\item Find,
\small
\begin{equation}
Q_{2}^{*}(0)=\max\{Q_{2}^{0}\in\{0,\ldots,\left\lfloor\frac{C}{a_2}\right\rfloor\}:\max_{Q_2\in\{0,\ldots, \left\lfloor\frac{C}{a_2}\right\rfloor\}}\pi^{(i)}(0,Q_2)= \pi^{(i)}(0,Q_{2}^{0}) \}.\nonumber
\end{equation}
\normalsize
\item For $j=1:\left\lfloor\frac{C}{a_1}\right\rfloor$ find,
\begin{eqnarray}
Q_{2}^{*}(j)&=&\max\{Q_{2}^{0}\in\{0,\ldots,\min(Q_{2}^{*}(j-1),\left\lfloor\frac{C-a_1 j}{a_2}\right\rfloor)\}:\nonumber\\
&&\hspace{1cm}\max_{Q_2}\pi^{(i)}(j,Q_2)= \pi^{(i)}(j,Q_{2}^{0}) \}.\nonumber
\end{eqnarray}
\item Find, $Q_{1}^{*}$ such that
\begin{equation}
\pi^{(i)}(Q_{1}^{*},Q_{2}^{*}(Q_{1}^{*}))=\max_{Q_1\in\{0,\ldots,\left\lfloor\frac{C}{a_1}\right\rfloor\}}\pi^{(i)}(Q_1,Q_{2}^{*}(Q_1)).\nonumber
\end{equation}
\end{itemize}

\section{Computational Analysis}\label{sec:numerical-analysis}
In this section we employ the algorithms developed previously in order to obtain insights on the effect of substitution on the ordering policies and the profit.
We are specifically interested in two general questions. First, how are the ordering policies and the expected profit per period affected when there is substitution between the two products compared to the case when no substitution is possible, and second how does the randomness of the replenishment time affect the ordering policies and the profit.

We first analyze  the effect of substitution, under constant replenishment time. We consider two products for which the Poisson demand rates are equal to $\lambda_1=\lambda_2=20$ units per unit time and normalize the length of the replenishment time to $T=1$.
We also assume $a_1=a_2=1$, which reduces the ordering constraint to $Q_1+Q_2=C$, thus $C$ represents the common storage capacity. 
 Regarding the economic parameters, we distinguish three representative cases corresponding to different product types. Specifically, if the storage capacity is practically infinite and there is no substitution, then the ordering problem is decomposed and for each product the newsvendor rule applies: the optimal order quantity of product $j$ is the first so that the service level is at least equal to the critical ratio $R_j=\frac{p_j-c_j}{p_j+h_j}$. Based on this observation we consider the following scenarios: (i) both products have a relatively high critical ratio $R_1=R_2=0.8$, (ii) one product has high and the other a low critical ratio $R_1=0.8, R_2=0.4$ and (iii) both critical ratios are low $R_1=R_2=0.4$. For each of the three scenarios we compute the optimal ordering policy and the optimal expected profit per period for varying capacity levels under two options: first when no substitution between the two products is possible, and second when there is two-way substitution with $p_{12}=p_{21}=0.4$. The results are presented in Figures \ref{fig:Tscen1}, \ref{fig:Tscen2} and \ref{fig:Tscen3}, respectively.

Several insights can be obtained from these results. First, the profit curves show that, as expected, the newsvendor is better off when there is substitutability between the two products. Second, this is generally achieved by changing the mix of products ordered, in favor of the most profitable of the two.

Specifically, under scenario 1 where both products have a relatively high order quantity, the benefit of substitution is small to nonexistent when the capacity is either very low or very high. In the first case, the order quantities are much smaller than those needed, and as a result both products run out together and fast, without giving substitution a chance to occur. In this case the order quantities are also the same between the two possibilities.
 On the other hand when the capacity is very high, substitution also offers a rather small benefit. It is possible to order a lower total quantity of both products without affecting the service level. This is so because substitution essentially allows to partially  take advantage of the pooling effect between the two independent demands, thus decreasing the downside risk of unsold products.
This beneficial behavior is even more pronounced when the capacity is low enough to be a restrictive factor for the profit, but high enough to allow substitution to take effect. In this intermediate range the relative  profit increase due to substitution is highest.

Under scenario 2, where both products have a relatively low order quantity, we obtain qualitatively similar insights, as under scenario 1. On the other hand, when the two products differ significantly in terms of their critical ratios, which is the case under scenario 3, then the effect of substitution can be quite pronounced to the extent that the product with the lower critical ratio may be completely dropped and its demand satisfied partially through substitution of the other product. In this case, the benefit of substitution in terms of profitability is persistent even when the storage capacity is nonbinding.

\begin{figure}
  \centering
 \includegraphics{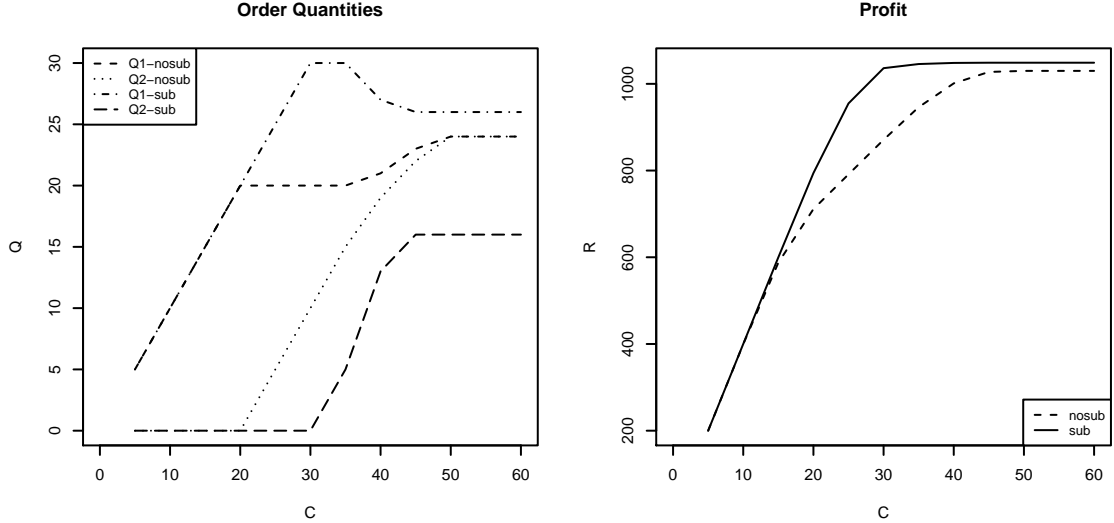}
  \caption{Effect of Substitution, Scenario 1:  $\lambda_1=\lambda_2=20, T=1, p_1=50, c_1=10, h_1=0, p_2=20, c_2=4, h_2=0, p_{12}=p_{21} =0 (\mbox{nosub}),  0.4 (\mbox{sub})$ }
  \label{fig:Tscen1}
\end{figure}

\begin{figure}
  \centering
 \includegraphics{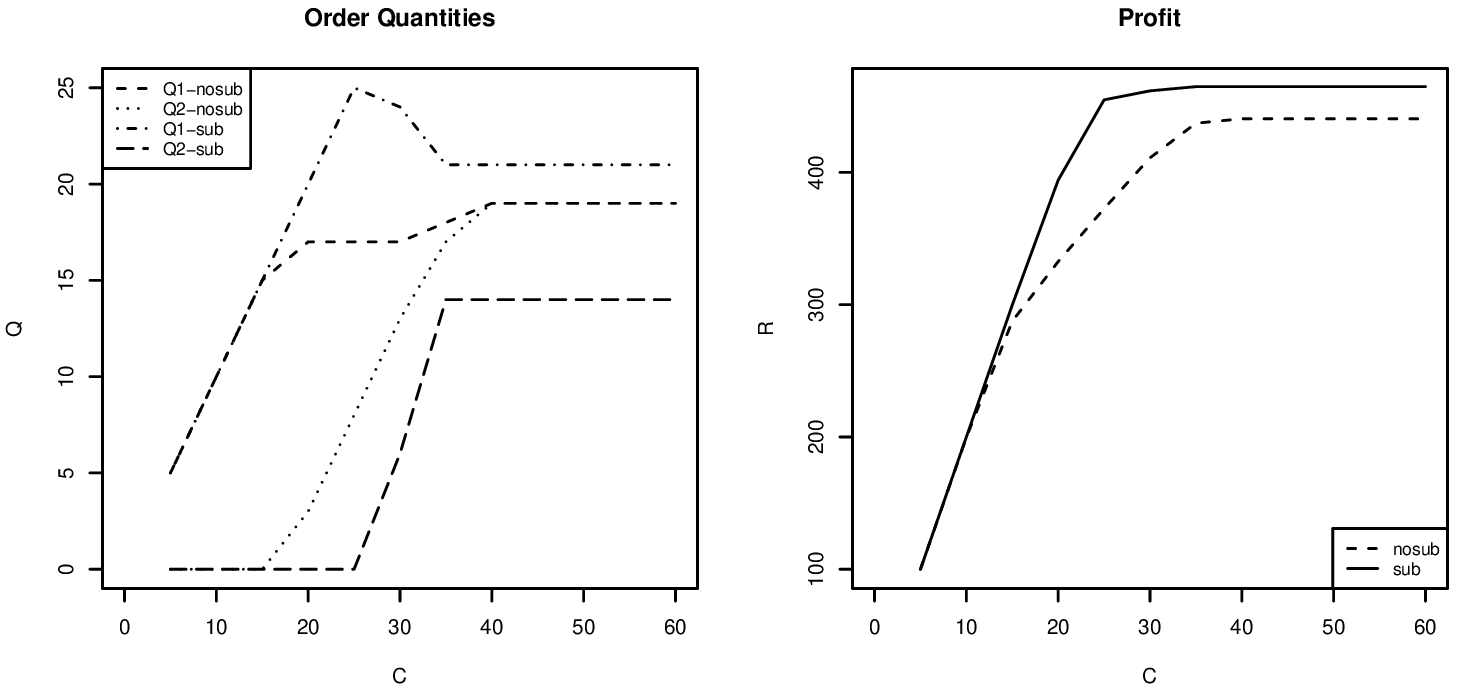}
\caption{Effect of Substitution, Scenario 2:  $\lambda_1=\lambda_2=20, T=1, p_1=50, c_1=30, h_1=0, p_2=20, c_2=12, h_2=0, p_{12}=p_{21} =0 (\mbox{nosub}),  0.4 (\mbox{sub})$ }
  \label{fig:Tscen2}
\end{figure}

\begin{figure}
  \centering
 \includegraphics{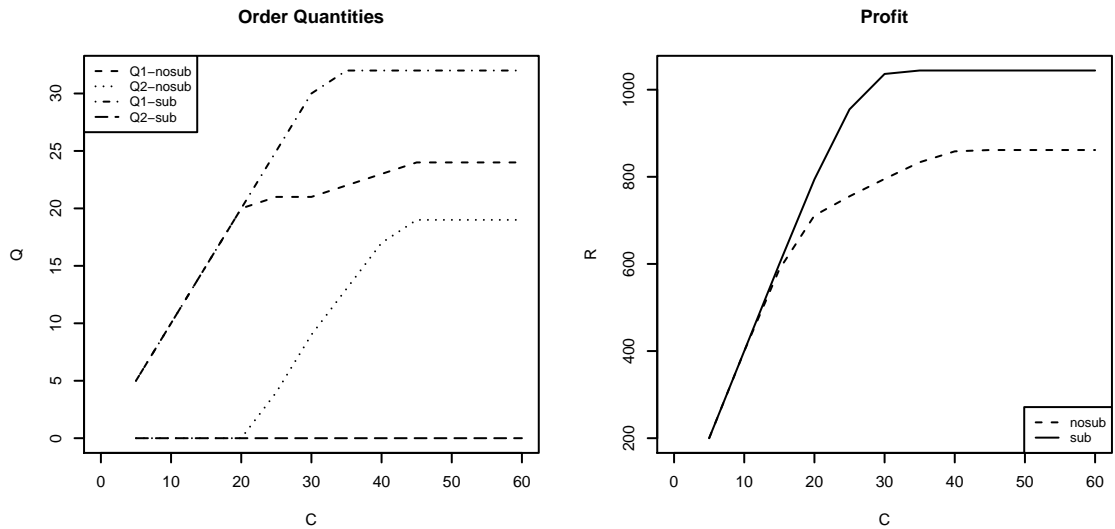}
\caption{Effect of Substitution, Scenario 3:  $\lambda_1=\lambda_2=20, T=1, p_1=50, c_1=10, h_1=0, p_2=20, c_2=12, h_2=0, p_{12}=p_{21} =0 (\mbox{nosub}),  0.4 (\mbox{sub})$ }
  \label{fig:Tscen3}
\end{figure}

In the second set of our numerical experiments, we aim to assess the impact of the variability in the replenishment time on the ordering policy and the profits under substitution. To this end we first performed a set of experiments analogous to scenarios 1-3 above, but now with the replenishment time exponentially distributed with mean 1.  The behavior of the ordering policy and the effect of substitution were similar to those under fixed replenishment time, although the expected profit was uniformly lower, indicating that the variability in replenishment time has a detrimental effect. To further analyze the effect,  we show in Figures \ref{fig:repscen1}, \ref{fig:repscen2} and \ref{fig:repscen3} the comparison of the ordering policies and profits under substitution between the cases of fixed and exponentially distributed replenishment times. From the results it is verified that when the replenishment time is random there is a significant decrease in the profit. Regarding the effect on the ordering policy, this depends on the product types. Specifically, under scenario 1, the upside risk of shortages is more costly than that of unsold products. Therefore, in the random case the more detrimental event is that the replenishment time will be large and as a result there will be high shortages. To mitigate this possibility the order quantities are significantly higher than those in the fixed case. On the other hand, under scenario 2, where the two critical ratios are low, the downside risk is more important and as a result the ordering policy gravitates towards lower quantities, to mitigate the possibility of short replenishment times and large quantities of unsold products. Under scenario 3, the results are similar, since product 1 with the high critical ratio is ordered at higher quantities under the random case, while the second product with the low critical ratio is not ordered at all in both cases, as already discussed in the first experiment. 

\begin{figure}
  \centering
 \includegraphics{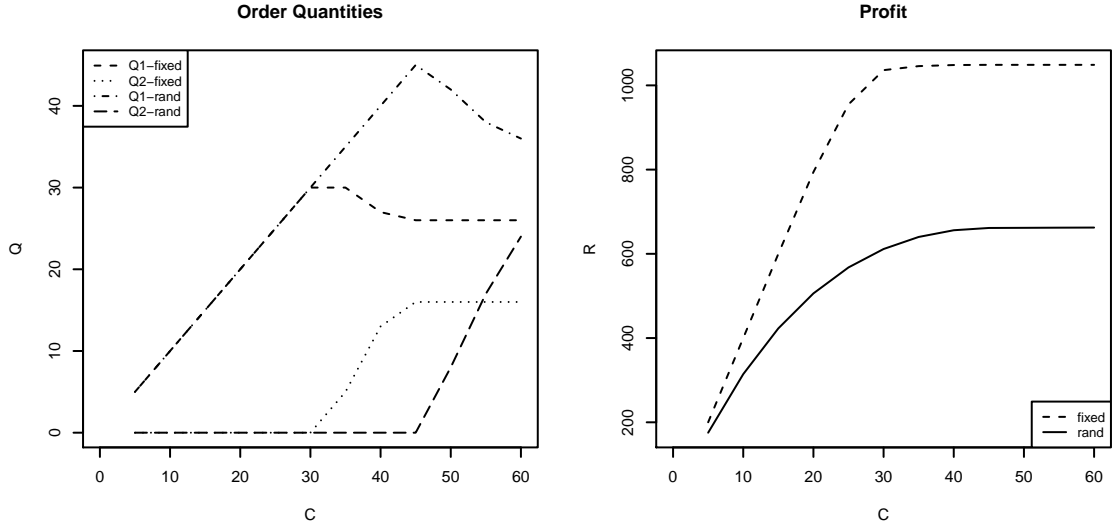}
  \caption{Effect of Random Replenishment Time, Scenario 1:  $\lambda_1=\lambda_2=20, T=1, p_1=50, c_1=10, h_1=0, p_2=20, c_2=4, h_2=0, p_{12}=p_{21} =0.4, T=1 (\mbox{fixed)}/ T\sim Exp(1) (\mbox{rand})$ }
  \label{fig:repscen1}
\end{figure}

\begin{figure}
  \centering
 \includegraphics{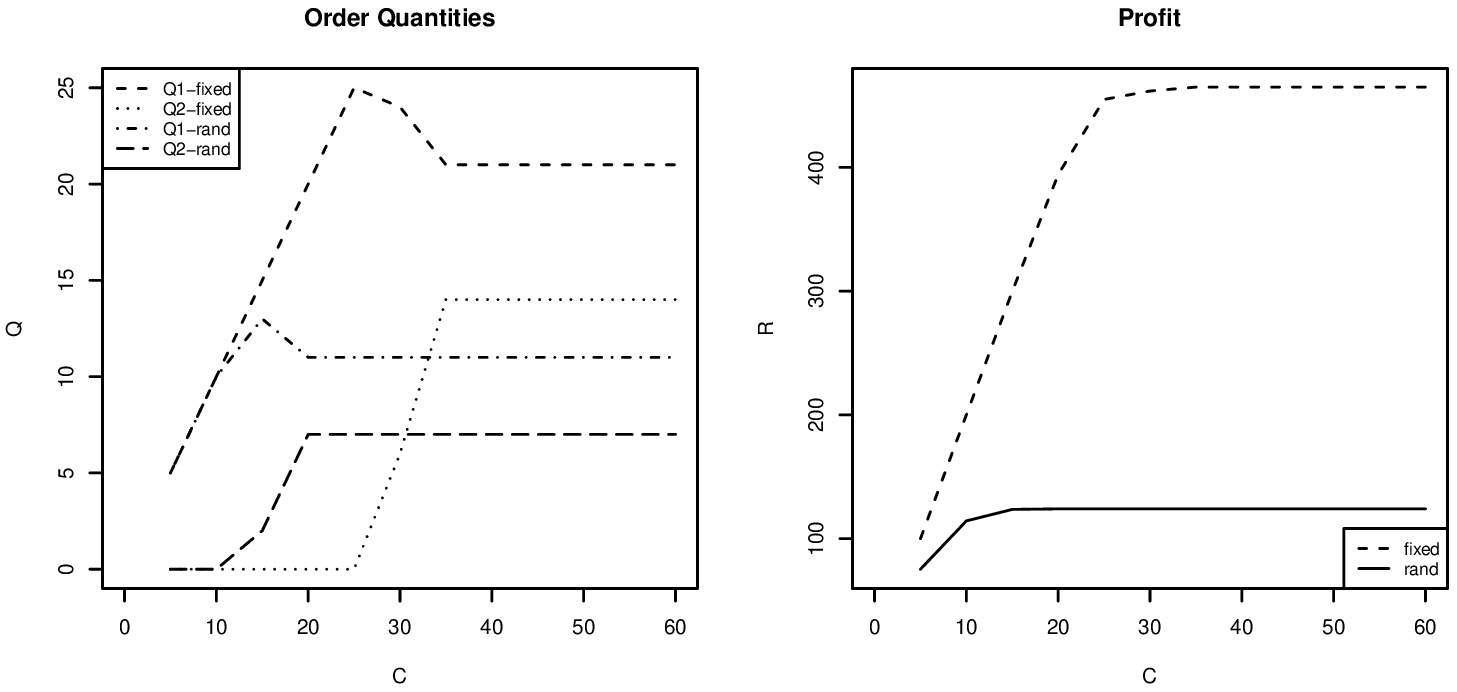}
\caption{Effect of Substitution, Scenario 2:  $\lambda_1=\lambda_2=20, T=1, p_1=50, c_1=30, h_1=0, p_2=20, c_2=12, h_2=0, p_{12}=p_{21} =0.4, T=1 (\mbox{fixed)}/ T\sim Exp(1) (\mbox{rand})$ }
  \label{fig:repscen2}
\end{figure}

\begin{figure}
  \centering
 \includegraphics{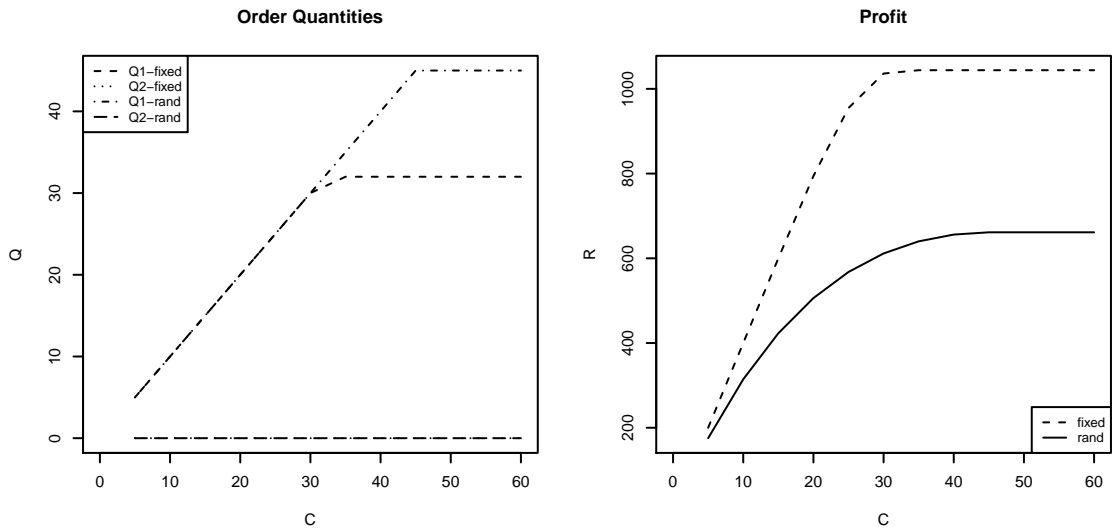}
\caption{Effect of Substitution, Scenario 3:  $\lambda_1=\lambda_2=20, T=1, p_1=50, c_1=10, h_1=0, p_2=20, c_2=12, h_2=0, p_{12}=p_{21} =0.4, T=1 (\mbox{fixed)}/ T\sim Exp(1) (\mbox{rand})$ }
  \label{fig:repscen3}
\end{figure}

\section{Conclusions and Extensions}\label{sec:extensions}

In this paper we developed a stochastic model of joint replenishment in an inventory system with two products under Poisson demand arrivals, limited storage capacity and partial two-way substitution. We have considered two cases of replenishment, periodic replenishment at fixed time intervals and random replenishment times. In both cases we defined a two-dimensional Markov Chain model for the inventory evolution and derived analytic expressions for the cost function. We established that the expected cost is submodular in the order quantities and employed this property to derive an efficient algorithm for computing the optimal policy. In computational experiments we explored the interaction between the substitution and limited capacity, and assessed the combined impact on the cost. 

This research can be extended in several directions. First, in addition to single echelon systems, substitution also affects inventory management in a supply chain framework, both in terms of centralized optimal policies, as well as in the interactions between agents and the formation of contacting mechanisms (cf. \cite{kraiselburd2004contracting}, who study the interaction between VMI policies and substitution in a single period model with stochastic demand). In particular, it would be interesting to study the formation of ordering policies under quantity discounts and product substitution in the context of the Poisson demand model developed in this paper (cf. \cite{katehakis-smit2012} for analysis of $(Q,R)$ ordering policies under Poisson demand and multiple quantity discounts). 

In the present paper we modeled limitations in the replenishment mechanism by a finite storage capacity. However in many cases inventory replenishment is limited by a finite production rate. When two or more products are produced by the same equipment with finite rate and possibly with significant production switching costs, immediate joint replenishment is not generally possible. In such situations the presence of substitution between products may mitigate the shortage effects and must be taken into account when designing ordering and production policies. A two-product system with product substitution could be defined as an extension of both our paper and \cite{shi2014production}, who analyze the single product case under finite replenishment rate and Poisson demand arrivals.

Finally, in our model we have assumed that the demand process is completely known both in terms of the arrival rates and the substitution probabilities. If this is not the case, there is a need to estimate the corresponding parameters, thus giving rise to the extension into adaptive estimation and ordering policies. The problem of parameter estimation in the present context is not straightforward. Indeed, if, for example, only sales and not actual demand is observed and at the end of a period one of the two products is out of stock, this may be due to a high demand rate of this product or alternatively of a high demand rate and a high substitution probability of the other product or both. The design of effective estimation methods combined with adaptive ordering policies is currently work in progress. 

\medskip

\centerline{\bf Acknowledgment}
This research has been co-financed by the European Union (European Social Fund - ESF) and Greek national funds through the Operational Program ''Education and Lifelong Learning'' of the National Strategic Reference Framework (NSRF)-Research Funding Program: Thales-Athens University of Economics and Business-New Methods in the Analysis of Market Competition: Oligopoly, Networks and Regulation.

\bibliographystyle{abbrvnat}


\newpage
\appendix
\section{Appendix}

\subsection{Proof of Theorem 1 (Transient distribution)}

Firstly, we define the generator matrix and using uniformization method we express the formulas of transient distribution.

As we mentioned before, in this case the duration of a period is $T$. Fix a replenishment policy $(Q_1,Q_2)$. The state of the vendor at time $t$ is represented by a pair $(n_1(t),n_2(t))$, where $n_{i}(t)$ denotes the inventory of product $i$ at time $t$, $i=1,2$, $t\in[0,T]$. The stochastic process $\{(n_1(t),n_2(t)):t\in[0,T]\}$ is a continuous time Markov chain  and infinitesimal generator matrix

\begin{equation}
\mathbb{Q}^{(1)}=\left(
                   \begin{array}{cccccc}
                     \mathbb{Q}_{0}^{(1)} & \mathbb{O} & \mathbb{O} & \ldots & \mathbb{O} & \mathbb{O} \\
                     \mathbb{Q}_{1}^{(1)} & \mathbb{Q}_{2}^{(1)} & \mathbb{O} & \ldots & \mathbb{O} & \mathbb{O} \\
                     \mathbb{O} & \mathbb{Q}_{1}^{(1)} & \mathbb{Q}_{2}^{(1)} & \ldots & \mathbb{O} & \mathbb{O} \\
                     \vdots & \vdots & \vdots & \ddots & \vdots & \vdots \\
                     \mathbb{O} & \mathbb{O} & \mathbb{O} & \ldots & \mathbb{Q}_{2}^{(1)} & \mathbb{O} \\
                     \mathbb{O} & \mathbb{O} & \mathbb{O} & \ldots & \mathbb{Q}_{1}^{(1)} & \mathbb{Q}_{2}^{(1)} \\
                   \end{array}
                 \right),\label{2.1}
\end{equation}
$\mathbb{Q}^{(1)}\in M_{Q_1 +1 \times Q_2 +1,Q_1 +1\times Q_2 +1}$, where

\begin{equation}
\mathbb{Q}_{0}^{(1)}=\left(
                       \begin{array}{cccccc}
                         0 & 0 & 0 & \ldots & 0 & 0 \\
                         s_2 & -s_2 & 0 & \ldots & 0 & 0 \\
                         0 & s_2 & -s_2 & \ldots & 0 & 0 \\
                         \vdots & \vdots & \vdots & \ddots & \vdots & \vdots \\
                         0 & 0 & 0 & \ldots & -s_2 & 0 \\
                         0 & 0 & 0 & \ldots & s_2 & -s_2 \\
                       \end{array}
                     \right)
,\label{2.2}
\end{equation}
$\mathbb{Q}_{0}^{(1)}\in M_{Q_2 +1,Q_2 +1}$, where $s_2=\lambda_2 +\lambda_1 p_{12}$ and the rows and columns correspond to states $(0,j)$, $j=0,1,\ldots,Q_2$,

\begin{equation}
\mathbb{Q}_{1}^{(1)}=\left(
                       \begin{array}{cccccc}
                         s_1 & 0 & 0 & \ldots & 0 & 0 \\
                         0 & \lambda_1 & 0 & \ldots & 0 & 0 \\
                         0 & 0 & \lambda_1 & \ldots & 0 & 0 \\
                         \vdots & \vdots & \vdots & \ddots & \vdots & \vdots \\
                         0 & 0 & 0 & \ldots & \lambda_1 & 0 \\
                         0 & 0 & 0 & \ldots & 0 & \lambda_1 \\
                       \end{array}
                     \right)
,\label{2.3}
\end{equation}
$\mathbb{Q}_{1}^{(1)}\in M_{Q_2 +1,Q_2 +1}$, where $s_1=\lambda_1 +\lambda_2 p_{21}$, and for fix $i$, $i=1,\ldots,Q_1$ the rows correspond to states $(i,j)$, $j=0,1,\ldots,Q_2$ and the columns correspond to states $(i-1,j)$, $j=0,1,\ldots,Q_2$,

\begin{equation}
\mathbb{Q}_{2}^{(1)}=\left(
                       \begin{array}{cccccc}
                         -s_1 & 0 & 0 & \ldots & 0 & 0 \\
                         \lambda_2 & -s & 0 & \ldots & 0 & 0 \\
                         0 & \lambda_2 & -s & \ldots & 0 & 0 \\
                         \vdots & \vdots & \vdots & \ddots & \vdots & \vdots \\
                         0 & 0 & 0 & \ldots & -s & 0 \\
                         0 & 0 & 0 & \ldots & \lambda_2 & -s \\
                       \end{array}
                     \right)
,\label{2.4}
\end{equation}
$\mathbb{Q}_{2}^{(1)}\in M_{Q_2 +1,Q_2 +1}$, where $s=\lambda_1 +\lambda_2$, and for fix $i$, $i=1,\ldots,Q_1$ the rows and the columns correspond to states $(i,j)$, $j=0,1,\ldots,Q_2$,

\begin{equation}
\mathbb{O}=\left(
                       \begin{array}{cccccc}
                         0 & 0 & 0 & \ldots & 0 & 0 \\
                         0 & 0 & 0 & \ldots & 0 & 0 \\
                         0 & 0 & 0 & \ldots & 0 & 0 \\
                         \vdots & \vdots & \vdots & \ddots & \vdots & \vdots \\
                         0 & 0 & 0 & \ldots & 0 & 0 \\
                         0 & 0 & 0 & \ldots & 0 & 0 \\
                       \end{array}
                     \right)
,\label{2.5}
\end{equation}
$\mathbb{O}\in M_{Q_2 +1,Q_2 +1}$.

Let
\begin{equation}
p(t)=[p_{(j_1,j_2)}(t)],\nonumber
\end{equation}
be the row vector that gives the transient distribution of the continuous time Markov chain $\{(n_1(t),n_2(t)):t\in[0,T]\}$, where

\begin{eqnarray}
p_{(i_1,i_2),(j_1,j_2)}(t)&=&P(n_1(t)=j_1,n_2(t)=j_2| n_1(o)=i_1,\nonumber\\
&&n_2(0)=i_2),\nonumber
\end{eqnarray}
$i_1,j_1\in\{ 0,1,2,\ldots,Q_1 \}$, $i_2,j_2\in\{ 0,1,2,\ldots,Q_2 \}$, $t\in[0,T]$, is the transition probability of the continuous time Markov chain $\{(n_1(t),n_2(t)):t\in[0,T]\}$,

\begin{equation}
\mathbb{P}(t)=[p_{(i_1,i_2),(j_1,j_2)}(t)],\nonumber
\end{equation}
is the transition matrix of the continuous time Markov chain $\{(n_1(t),$ $n_2(t)):t\in[0,T]\}$.

So, we want to determine the row vector $p(T)$. We have that

\begin{equation}
p(T)=p(0)\cdot \mathbb{P}(T).\label{3.1}
\end{equation}

At the beginning of the period $Q_1$ units of $1$ and $Q_2$ units of product $2$ are ordered. So, the initial distribution $p(0)$ is given by

\begin{equation}
p(0)=\left(
                   \begin{array}{cccccc}
                     0 & 0 & 0 & \ldots & 0 & 1 \\
                   \end{array}
                 \right)
.\label{3.2}
\end{equation}

Since the only nonzero element of  $p(0)$ is the last one, we only need to know the last row of matrix $\mathbb{P}(T)$ in order to compute $p(T)$.

We will determine those elements of $p(T)$ using uniformization method. The largest absolute value of generator matrix $\mathbb{Q}^{(1)}$ is $s$, (recall that $s=\lambda_1+\lambda_2$), so we have that

\begin{equation}
p(t)=\sum_{k=0}^{\infty}e^{-st}\frac{(st)^k}{k!}\mathbb{P}^k,\label{3.3}
\end{equation}
where matrix $\mathbb{P}\in M_{Q_1 +1 \times Q_2 +1,Q_1 +1\times Q_2 +1}$ is given by

\begin{equation}
\mathbb{P}=\frac{1}{s}\mathbb{Q}^{(1)}+\mathbb{I},\label{3.4}
\end{equation}
where matrix $\mathbb{Q}^{(1)}$ is given by \eqref{2.1} and $\mathbb{I}\in M_{Q_1 +1 \times Q_2 +1,Q_1 +1\times Q_2 +1}$ is the diagonal matrix with all diagonal elements equal to one.

Therefore, to prove the formulas we need to derive the transient distribution for the discrete time chain.

So,

\begin{equation}
\mathbb{P}=\left(
            \begin{array}{cccccc}
              \mathbb{P}_0 & \mathbb{O} & \mathbb{O} & \ldots & \mathbb{O} & \mathbb{O} \\
              \mathbb{P}_1 & \mathbb{P}_2 & \mathbb{O} & \ldots & \mathbb{O} & \mathbb{O} \\
              \mathbb{O} & \mathbb{P}_1 & \mathbb{P}_2 & \ldots & \mathbb{O} & \mathbb{O} \\
              \vdots & \vdots & \vdots & \ddots & \vdots & \vdots \\
              \mathbb{O} & \mathbb{O} & \mathbb{O} & \ldots & \mathbb{P}_2 & \mathbb{O} \\
              \mathbb{O} & \mathbb{O} & \mathbb{O} & \ldots & \mathbb{P}_1 & \mathbb{P}_2 \\
            \end{array}
          \right),\label{3.5}
\end{equation}
where

\begin{equation}
\mathbb{P}_0=\left(
            \begin{array}{cccccc}
              1 & 0 & 0 & \ldots & 0 & 0 \\
              s_2/s & (s-s_2)/s & 0 & \ldots & 0 & 0 \\
              0 & s_2/s & (s-s_2)/s & \ldots & 0 & 0 \\
              \vdots & \vdots & \vdots & \ddots & \vdots & \vdots \\
              0 & 0 & 0 & \ldots & (s-s_2)/s & 0 \\
              0 & 0 & 0 & \ldots & s_2/s & (s-s_2)/s \\
            \end{array}
          \right),\label{3.6}
\end{equation}
$\mathbb{P}_{0}\in M_{Q_2 +1,Q_2 +1}$, (recall that $s_2=\lambda_2 +\lambda_1 p_{12}$) where the rows and columns correspond to states $(0,j)$, $j=0,1,\ldots,Q_2$,

\begin{equation}
\mathbb{P}_1=\left(
            \begin{array}{cccccc}
              s_1/s & 0 & 0 & \ldots & 0 & 0 \\
              0 & \lambda_1/s & 0 & \ldots & 0 & 0 \\
              0 & 0 & \lambda_1/s & \ldots & 0 & 0 \\
              \vdots & \vdots & \vdots & \ddots & \vdots & \vdots \\
              0 & 0 & 0 & \ldots & \lambda_1/s & 0 \\
              0 & 0 & 0 & \ldots & 0 & \lambda_1/s \\
            \end{array}
          \right),\label{3.7}
\end{equation}
$\mathbb{P}_{1}\in M_{Q_2 +1,Q_2 +1}$, (recall that $s_1=\lambda_1 +\lambda_2 p_{21}$), where for fix $i$, $i=1,\ldots,Q_1$ the rows correspond to states $(i,j)$, $j=0,1,\ldots,Q_2$ and the columns correspond to states $(i-1,j)$, $j=0,1,\ldots,Q_2$,

\begin{equation}
\mathbb{P}_2=\left(
            \begin{array}{cccccc}
              s-s_1/s & 0 & 0 & \ldots & 0 & 0 \\
              \lambda_2/s & 0 & 0 & \ldots & 0 & 0 \\
              0 & \lambda_2/s & 0 & \ldots & 0 & 0 \\
              \vdots & \vdots & \vdots & \ddots & \vdots & \vdots \\
              0 & 0 & 0 & \ldots & 0 & 0 \\
              0 & 0 & 0 & \ldots & \lambda_2/s & 0 \\
            \end{array}
          \right),\label{3.8}
\end{equation}
$\mathbb{P}_{2}\in M_{Q_2 +1,Q_2 +1}$, where for fix $i$, $i=1,\ldots,Q_1$ the rows and the columns correspond to states $(i,j)$, $j=0,1,\ldots,Q_2$, and $\mathbb{O}$ is given by \eqref{2.5}.

Matrix $\mathbb{P}$ is a stochastic matrix and can be considered as the one-step transition probability matrix of a discrete time Markov chain $\{ (n_{1}^{d}(k),n_{2}^{d}(k), \\ k\in\{0,1,\ldots \}) \}$ with state space $S=\{ (i_1,i_2):i_1 \in\{ 0,\ldots,Q_1 \},i_2 \in \{ 0,\ldots,Q_2 \} \}$ and transition as shown on the following diagram.

\tiny{\xymatrix{*+<2pt>[F-:<20pt>]{Q_1,Q_2}\ar@/^1pc/[r]^{\lambda_{2}/s}\ar@/^1pc/[d]^>>>>>>{\lambda_1/s}&
*+<2pt>[F-:<20pt>]{Q_1,Q_2-1}\ar@/^1pc/[r]^{\lambda_{2}/s}\ar@/^1pc/[d]^>>>>>>{\lambda_{1}/s}&
*+<2pt>[F-:<20pt>]{Q_1,Q_2-2}\ar@/^1pc/[d]^>>>>>>{\lambda_{1}/s}&...&
*+<2pt>[F-:<20pt>]{Q_1,1}\ar@/^1pc/[r]^{\lambda_{2}/s}\ar@/^1pc/[d]^>>>>>>{\lambda_{1}/s}&
*+<2pt>[F-:<20pt>]{Q_1,0}\ar@(ur,dr)^{(s-s_1)/s}\ar@/^1pc/[d]^>>>>>>{s_1/s}\\
*+<2pt>[F-:<20pt>]{Q_1-1,Q_2}\ar@/^1pc/[r]^{\lambda_{2}/s}\ar@/^1pc/[d]^>>>>>>{\lambda_1/s}&
*+<2pt>[F-:<20pt>]{Q_1-1,Q_2-1}\ar@/^1pc/[r]^{\lambda_{2}/s}\ar@/^1pc/[d]^>>>>>>{\lambda_{1}/s}&
*+<2pt>[F-:<20pt>]{Q_1-1,Q_2-2}\ar@/^1pc/[d]^>>>>>>{\lambda_{1}/s}&...&
*+<2pt>[F-:<20pt>]{Q_1-1,1}\ar@/^1pc/[r]^{\lambda_{2}/s}\ar@/^1pc/[d]^>>>>>>{\lambda_{1/s}}&
*+<2pt>[F-:<20pt>]{Q_1-1,0}\ar@(ur,dr)^{(s-s_1)/s}\ar@/^1pc/[d]^>>>>>>{s_1/s}\\
*+<2pt>[F-:<20pt>]{Q_1-2,Q_2}\ar@/^1pc/[r]^{\lambda_{2}/s}&
*+<2pt>[F-:<20pt>]{Q_1-2,Q_2-1}\ar@/^1pc/[r]^{\lambda_{2}/s}&
*+<2pt>[F-:<20pt>]{Q_1-2,Q_2-2}&...&
*+<2pt>[F-:<20pt>]{Q_1-2,1}\ar@/^1pc/[r]^{\lambda_{2}/s}&
*+<2pt>[F-:<20pt>]{Q_1-2,0}\ar@(ur,dr)^{(s-s_1)/s}\\
\vdots&\vdots&\vdots&&\vdots&\vdots\\
*+<2pt>[F-:<20pt>]{1,Q_2}\ar@/^1pc/[r]^{\lambda_{2}/s}\ar@/^1pc/[d]^>>>>>>{\lambda_1/s}&
*+<2pt>[F-:<20pt>]{1,Q_2-1}\ar@/^1pc/[r]^{\lambda_{2}/s}\ar@/^1pc/[d]^>>>>>>{\lambda_{1}/s}&
*+<2pt>[F-:<20pt>]{1,Q_2-2}\ar@/^1pc/[d]^>>>>>>{\lambda_{1}/s}&...&
*+<2pt>[F-:<20pt>]{1,1}\ar@/^1pc/[r]^<<<<<<{\lambda_{2}/s}\ar@/^1pc/[d]^>>>>>>{\lambda_{1}/s}&
*+<2pt>[F-:<20pt>]{1,0}\ar@(ur,dr)^{(s-s_1)/s}\ar@/^1pc/[d]^>>>>>>{s_1/s}\\
*+<2pt>[F-:<20pt>]{0,Q_2}\ar@(ur,dr)^{(s-s_2)/s}\ar@/^1pc/[r]^<<<<<<<{s_2/s}&
*+<2pt>[F-:<20pt>]{0,Q_2-1}\ar@(ur,dr)^{(s-s_2)/s}\ar@/^1pc/[r]^<<<<<<<{s_2/s}&
*+<2pt>[F-:<20pt>]{0,Q_2-2}\ar@(ur,dr)^{(s-s_2)/s}&...&
*+<2pt>[F-:<20pt>]{0,1}\ar@(ur,dr)^{(s-s_2)/s}\ar@/^1pc/[r]^<<<<<<<{s_2/s}&
*+<2pt>[F-:<20pt>]{0,0}\ar@(ur,dr)^{(s-s_1)/s}}}
\vspace{0.3cm}
\scriptsize{}
\normalsize

Denoting by $p_{(i_1,i_2),(j_1,j_2)}^{d}(k)$ the k-step transition probability of this discrete time Markov chain, i.e.

\begin{equation}
p_{(i_1,i_2),(j_1,j_2)}^{d}(k)=P(n_{1}^{d}(k)=j_1,n_{2}^{d}(k)=j_2| n_{1}^{d}(o)=i_1,
n_{2}^{d}(0)=i_2),\nonumber
\end{equation}
we have that $\mathbb{P}^k=[p_{(i_1,i_2),(j_1,j_2)}^{d}(k)]$.

As we mentioned before, we only need the elements of the last row of $p(T)$. So using \eqref{3.3}, we only need the elements of the last row of $\mathbb{P}^k$. Thus, transition probabilities $p_{(Q_1,Q_2),(j_1,j_2)}^{d}(k)$, $j_1 \in\{ 0,\ldots,Q_1 \}$, $i_2 \in \{ 0,\ldots,Q_2 \}$, $k \in\{0,1,\ldots \}$ must be computed. We have the following Lemma.

\begin{lem}
The transition probabilities $p_{(Q_1,Q_2),(j_1,j_2)}^{d}(k)$, $j_1 \in\{ 0,\ldots,\\ Q_1 \}$, $i_2 \in \{ 0,\ldots,Q_2 \}$, $k\in\{0,1,\ldots \}$ of the discrete time Markov chain $\{ (n_{1}^{d}(k), n_{2}^{d}(k),k\in\{0,1,\ldots \}) \}$ are given by the following formulas:\\

\begin{eqnarray}
p_{(Q_1,Q_2),(j_1,j_2)}^{d}(k)= \hspace{8.5cm} & & \nonumber
\\
\small
                              \left\{
                                \begin{array}{ll}
                                    \left(
                                     \begin{array}{c}
                                       k \\
                                       Q_1-j_1 \\
                                     \end{array}
                                   \right)
\left(\frac{\lambda_1}{s}\right)^{Q_1-j_1}\left(\frac{\lambda_2}{s}\right)^{Q_2-j_2}, & \hbox{if } j_1,j_2>0,\\
                                   & k=Q_1-j_1+Q_2-j_2 \\
                                   & \\
                                   0, & \hbox{otherwise,}
                                \end{array}
                              \right.&&
\label{3.9}
\normalsize
\end{eqnarray}

\begin{eqnarray}
p_{(Q_1,Q_2),(j_1,0)}^{d}(k)= \hspace{7.5cm} & & \nonumber
\\
\tiny
                              \left\{
                                \begin{array}{ll}
\sum_{l=Q_2}^{Q_2+Q_1-j_1}
                                    \left(
                                     \begin{array}{c}
                                       l-1 \\
                                       Q_2-1 \\
                                     \end{array}
                                   \right)
 \left(\frac{\lambda_2}{s}\right)^{Q_2}\left(\frac{\lambda_1}{s}\right)^{l-Q_2} & \\
                         \cdot    \left(
                                   \begin{array}{c}
                                     k-l \\
                                     Q_1+Q_2-j_1-l \\
                                   \end{array}
                                 \right)
 \left(\frac{s_1}{s}\right)^{Q_1+Q_2-j_1-l}\left(\frac{s-s_1}{s}\right)^{k-Q_1-Q_2+j_1}, & \\
                                   & \hbox{if }j_1>0, \\
                                   & k \geq Q_1-j_1+Q_2 \\
                                   & \\

                                   0, & \hbox{otherwise,}
                                \end{array}
                              \right.&&
\normalsize
\label{3.10}
\end{eqnarray}

\begin{eqnarray}
p_{(Q_1,Q_2),(0,j_2)}^{d}(k)= \hspace{7.5cm} & & \nonumber
\\
\small
                              \left\{
                                \begin{array}{ll}
\sum_{l=Q_1}^{Q_2+Q_1-j_2}
                                    \left(
                                     \begin{array}{c}
                                       l-1 \\
                                       Q_1-1 \\
                                     \end{array}
                                   \right)
 \left(\frac{\lambda_1}{s}\right)^{Q_1}\left(\frac{\lambda_2}{s}\right)^{l-Q_1} & \\
                           \cdot      \left(
                                   \begin{array}{c}
                                     k-l \\
                                     Q_1+Q_2-j_2-l \\
                                   \end{array}
                                 \right)
 \left(\frac{s_2}{s}\right)^{Q_1+Q_2-j_2-l} & \\
\cdot \left(\frac{s-s_2}{s}\right)^{k-Q_1-Q_2+j_2}, & \\
                                   & \hbox{if }j_2>0, \\
                                   & k \geq Q_1-j_2+Q_2 \\
                                   & \\

                                   0, & \hbox{otherwise,}
                                \end{array}
                              \right.&&
\normalsize
\label{3.11}
\end{eqnarray}

\begin{equation}
p_{(Q_1,Q_2),(0,0)}^{d}(k)=1-\sum_{(j_1,j_2)\in S\smallsetminus(0,0)}p_{(Q_1,Q_2),(j_1,j_2)}^{d}(k). \hspace{2cm}\label{3.12}
\end{equation}
\end{lem}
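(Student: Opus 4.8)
The plan is to prove the four formulas by a direct sample-path decomposition of the discrete time chain, exploiting the fact that each coordinate of $(n_1^{d}(k),n_2^{d}(k))$ is nonincreasing in $k$ and that self-loops occur only on the two boundary lines $\{i_1=0\}$ and $\{i_2=0\}$, whereas the interior carries no self-loop (from any interior state the only moves are a product-$1$ sale with probability $\lambda_1/s$ or a product-$2$ sale with probability $\lambda_2/s$, and these sum to $1$). Since a coordinate that reaches $0$ is absorbed there and can never recover a positive value, every path from $(Q_1,Q_2)$ to a prescribed $(j_1,j_2)$ in $k$ steps is classified by which, if any, coordinate is depleted, and this classification matches exactly the four cases \eqref{3.9}--\eqref{3.12}.

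For the interior case $j_1,j_2>0$ I would argue that any path ending at $(j_1,j_2)$ with both coordinates strictly positive must remain in the interior at every step, hence makes no self-loop, so the number of steps is forced to equal the total number of sales $k=(Q_1-j_1)+(Q_2-j_2)$; the number of admissible lattice paths is $\binom{k}{Q_1-j_1}$, and multiplying by the per-path probability $(\lambda_1/s)^{Q_1-j_1}(\lambda_2/s)^{Q_2-j_2}$ (and assigning $0$ to every other $k$) gives \eqref{3.9}. For the bottom-boundary case $j_1>0,\,j_2=0$, the key idea is a first-passage decomposition at the step $l$ at which product $2$ is first depleted. Before step $l$ the path is interior and product $2$ performs its $Q_2$-th sale exactly at step $l$, so the last of the first $l$ steps is a product-$2$ sale and the remaining $Q_2-1$ product-$2$ sales are placed among the first $l-1$ steps, giving $\binom{l-1}{Q_2-1}$ paths of probability $(\lambda_2/s)^{Q_2}(\lambda_1/s)^{l-Q_2}$. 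Here product $1$ equals $Q_1+Q_2-l$ at step $l$, which is $\geq j_1>0$ precisely on the summation range $Q_2\le l\le Q_2+Q_1-j_1$, so the requirement that product $2$ (and not product $1$) vanish first is encoded exactly by that range. After step $l$ the chain sits on the bottom line, makes product-$1$ sales with probability $s_1/s$ and self-loops with probability $(s-s_1)/s$, and to reach $(j_1,0)$ it must make $Q_1+Q_2-j_1-l$ further sales and $k-Q_1-Q_2+j_1$ self-loops among the remaining $k-l$ steps, contributing $\binom{k-l}{Q_1+Q_2-j_1-l}(s_1/s)^{Q_1+Q_2-j_1-l}((s-s_1)/s)^{k-Q_1-Q_2+j_1}$. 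Summing the product of the two phases over $l$ yields \eqref{3.10}, the threshold $k\ge Q_1-j_1+Q_2$ arising from nonnegativity of the self-loop exponent; the case $j_2>0,\,j_1=0$ is identical after interchanging the two products and replacing $s_1$ by $s_2$, giving \eqref{3.11}.

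Finally, \eqref{3.12} follows by complementation, since the rows of $\mathbb{P}^k$ sum to $1$. The step I expect to be the main obstacle is the first-passage decomposition in the boundary cases: one must check that fixing the depletion step $l$ together with the convention that the last interior step be a product-$2$ sale enumerates each path exactly once, that product $1$ cannot vanish before product $2$ on the stated range of $l$ (so the two boundary scenarios are disjoint and no double counting occurs), and that phase two never reaches $(0,0)$ when $j_1>0$ since product $1$ stays at $\geq j_1>0$. As a fallback verification I would keep in reserve an induction on $k$ via $\mathbb{P}^{k}=\mathbb{P}^{k-1}\mathbb{P}$ together with the block structure \eqref{3.5}--\eqref{3.8}, confirming that the closed forms satisfy the one-step recursion, but I prefer the path-counting argument for its transparency.
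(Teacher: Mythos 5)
Your proposal is correct and follows essentially the same route as the paper's own proof: the interior case via lattice-path counting of type-1 and type-2 steps (forcing $k=Q_1-j_1+Q_2-j_2$), the boundary cases via a first-passage decomposition at the step $l$ where the depleted product makes its last sale (yielding the $\binom{l-1}{Q_2-1}$ factor and the subsequent binomial count of sales versus self-loops on the boundary line), and complementation from the stochasticity of $\mathbb{P}^k$ for the state $(0,0)$. No substantive difference from the paper's argument.
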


\begin{proof}
We want to compute the probability that the discrete time Markov chain $\{ (n_{1}^{d}(k), n_{2}^{d}(k),k\in\{0,1,\ldots \}) \}$ makes a transition from state $(Q_1,Q_2)$ to state $(j_1,j_2)$, $j_1\in\{ 0,1,\ldots,Q_1 \}$, $j_2\in\{ 0,1,\ldots,Q_2 \}$, in $k$ steps, $k\in\{ 0,1,\ldots \}$. We can do this if we determine in how many ways this transition can be made and the probability of each way.

In order to describe the transitions we will refer to a step from state $(i_1,i_2)$ to state $(i_1-1,i_2)$, $i_1\in\{ 1,\dots,Q_1 \}$, $i_2\in\{0,\ldots,Q_2\}$ as type-1 step and to a step from state $(i_1,i_2)$ to state $(i_1,i_2-1)$, $i_1\in\{ 0,\dots,Q_1 \}$, $i_2\in\{1,\ldots,Q_2\}$ as a type-2 step.

Now, if $j_1,j_2>0$ a transition from $(Q_1,Q_2)$ to $(j_1,j_2)$ can happen in exactly $k=Q_1-j_1+Q_2-j_2$ steps. More concretely, we must have $Q_1-j_1$ type-1 steps and $Q_2-j_2$ type-2 steps. That can be made in \small $\left(
                                                                                                   \begin{array}{c}
                                                                                                     k \\
                                                                                                     Q_1-j_1 \\
                                                                                                   \end{array}
                                                                                                 \right)
$\normalsize different ways and each way has probability  $\left(\frac{\lambda_1}{s}\right)^{Q_1-j_1} \cdot \left(\frac{\lambda_2}{s}\right)^{Q_2-j_2}$. Thus, we obtain \eqref{3.9}.

In order to find the probability of a transition from $(Q_1,Q_2)$ to $(j_1,0)$, $j_1>0$, in $k$ steps we must condition on the number of steps until the first passage in the set of states $\{(j_1,0),(j_1+1,0),\ldots,(Q_1,0)\}$. Assuming that the number of these steps is $l$, $l=Q_2,\ldots,Q_2+Q_1-j_1$, we have that after $l$ steps the system is at state$(Q_1+Q_2-l,0)$ and the transition from $(Q_1,Q_2)$ to $(Q_1+Q_2-l,0)$ can happen in \small $\left(
                                                                                                    \begin{array}{c}
                                                                                                      l-1 \\
                                                                                                      Q_2-1 \\
                                                                                                    \end{array}
                                                                                                  \right)
$\normalsize ways (since the last step is of type-2) and each way has probability $\left(\frac{\lambda_2}{s}\right)^{Q_2} \cdot \left(\frac{\lambda_1}{s}\right)^{l-Q_2}$. Then, the transition from $(Q_1+Q_2-l,0)$ to $(j_1,0)$ in $k-l$ steps can occur in \small$\left(
                       \begin{array}{c}
                         k-l \\
                         Q_1+Q_2-j_1-l \\
                       \end{array}
                     \right)
$\normalsize 
ways, because we must have $Q_1+Q_2-j_1-l$ type-1 steps, and each way has probability $\left(\frac{s_1}{s}\right)^{Q_1+Q_2-j_1-l}\left(\frac{s-s_1}{s}\right)^{k-Q_1-Q_2+j_1}$. So, we obtain \eqref{3.10}.

Using the same line of arguments we obtain \eqref{3.11}.

Finally, since the matrix $\mathbb{P}^k$ is stochastic, we have that\\ $\sum_{(j_1,j_2)\in S}p_{(Q_1,Q_2),(j_1,j_2)}^{d}(k)=1$. So, we have formula \eqref{3.12}.

\end{proof}

Now, the proof of Theorem $1$ is complete.

\subsection{Proof of Theorem 3 (Stationary distribution)}

For the random replenishment time case, our approach is to derive the rate matrix $\mathbb{Q}$ and consider directly the balance equations for the stationary distribution. Fix a replenishment policy $(Q_1,Q_2)$.

The rate matrix is

\begin{equation}
\mathbb{Q}^{(2)}=\left(
                   \begin{array}{cccccc}
                     \mathbb{Q}_{0}^{(2)} & \mathbb{O} & \mathbb{O} & \ldots & \mathbb{O} & \mathbb{M} \\
                     \mathbb{Q}_{1}^{(2)} & \mathbb{Q}_{2}^{(2)} & \mathbb{O} & \ldots & \mathbb{O} & \mathbb{M} \\
                     \mathbb{O} & \mathbb{Q}_{1}^{(2)} & \mathbb{Q}_{2}^{(2)} & \ldots & \mathbb{O} & \mathbb{M} \\
                     \vdots & \vdots & \vdots & \ddots & \vdots & \vdots \\
                     \mathbb{O} & \mathbb{O} & \mathbb{O} & \ldots & \mathbb{Q}_{2}^{(2)} & \mathbb{M} \\
                     \mathbb{O} & \mathbb{O} & \mathbb{O} & \ldots & \mathbb{Q}_{1}^{(2)} & \mathbb{Q}_{2}^{(2)}+\mathbb{M} \\
                   \end{array}
                 \right),\label{2.7}
\end{equation}
$\mathbb{Q}^{(2)}\in M_{Q_1 +1 \times Q_2 +1,Q_1 +1\times Q_2 +1}$, where

\begin{equation}
\mathbb{M}=\left(
                       \begin{array}{cccccc}
                         0 & 0 & 0 & \ldots & 0 & \mu \\
                         0 & 0 & 0 & \ldots & 0 & \mu \\
                         0 & 0 & 0 & \ldots & 0 & \mu \\
                         \vdots & \vdots & \vdots & \ddots & \vdots & \vdots \\
                         0 & 0 & 0 & \ldots & 0 & \mu \\
                         0 & 0 & 0 & \ldots & 0 & \mu \\
                       \end{array}
                     \right)
,\label{2.8}
\end{equation}
$\mathbb{M}\in M_{Q_2 +1,Q_2 +1}$,

\begin{eqnarray}
\mathbb{Q}_{0}^{(2)}&=&\mathbb{Q}_{0}^{(1)}-\mu\mathbb{I},\label{2.9}\\
\mathbb{Q}_{1}^{(2)}&=&\mathbb{Q}_{1}^{(1)},\label{2.9}\\
\mathbb{Q}_{2}^{(2)}&=&\mathbb{Q}_{2}^{(1)}-\mu\mathbb{I},\label{2.11}
\end{eqnarray}
where $\mathbb{Q}_{0}^{(1)}$, $\mathbb{Q}_{1}^{(1)}$ and $\mathbb{Q}_{2}^{(1)}$ are given in \eqref{2.2}-\eqref{2.4} and $\mathbb{I}\in M_{Q_2 +1,Q_2 +1}$ is the diagonal matrix with all diagonal elements equal to one.

The stationary distribution
  $(\pi_{(i_1,i_2)}:(i_1,i_2)\in S)$ is obtained as the
  unique positive normalized solution of the following system of
  balance equations:

\begin{equation}
  \pi_{(Q_1,Q_2)}\cdot(s+\mu)=\mu\sum_{i_1=0}^{Q_1}\sum_{i_2=0}^{Q_2}\pi_{(Q_1-i_1,Q_2-i_2)},\label{4.11}
\end{equation}
\begin{equation}
  \pi_{(Q_1,Q_2-i_2)}\cdot(s+\mu)=\lambda_2\cdot\pi_{(Q_1,Q_2-i_2+1)}, \ i_2=1,\ldots,Q_2-1,\label{4.12}
\end{equation}
\begin{equation}
  \pi_{(Q_1-i_1,Q_2)}\cdot(s+\mu)=\lambda_1\cdot\pi_{(Q_1-i_1+1,Q_2)}, \ i_1=1,\ldots,Q_1-1,\label{4.13}
\end{equation}
\begin{eqnarray}
  \pi_{(Q_1-i_1,Q_2-i_2)}\cdot(s+\mu)&=&\lambda_1\cdot\pi_{(Q_1-i_1+1,Q_2-i_2)}+ \lambda_2\cdot\pi_{(Q_1-i_1,Q_2-i_2+1)},\nonumber\\
  && i_1=1,\ldots,Q_1-1, \ i_2=1,\ldots,Q_2-1,\label{4.14}
\end{eqnarray}
\begin{equation}
  \pi_{(Q_1,0)}\cdot(s_1+\mu)=\lambda_2\cdot\pi_{(Q_1,1)},\label{4.15}
\end{equation}
\begin{equation}
  \pi_{(0,Q_2)}\cdot(s_2+\mu)=\lambda_1\cdot\pi_{(1,Q_2)},\label{4.16}
\end{equation}
\begin{eqnarray}
  \pi_{(Q_1-i_1,0)}\cdot(s_1+\mu)&=&s_1\cdot\pi_{(Q_1-i_1+1,0)}+ \lambda_2\cdot\pi_{(Q_1-i_1,1)},\nonumber\\
  && i_1=1,\ldots,Q_1-1,\label{4.17}
\end{eqnarray}
\begin{eqnarray}
  \pi_{(0,Q_2-i_2)}\cdot(s_2+\mu)&=&s_2\cdot\pi_{(0,Q_2-i_2+1)}+ \lambda_1\cdot\pi_{(1,Q_2-i_2)},\nonumber\\
  && i_2=1,\ldots,Q_2-1,\label{4.18}
\end{eqnarray}
\begin{equation}
  \pi_{(0,0)}\cdot\mu=s_1\cdot\pi_{(1,0)}+ s_2\cdot\pi_{(0,1)},\label{4.19}
\end{equation}
where in \eqref{4.11} we have included the pseudo-transitions from
state $(Q_1,Q_2)$ to $(Q_1,Q_2)$ with rate $\mu$, which there was no
demand. Using the normalization equation

\begin{equation}
  \sum_{i_1=0}^{Q_1}\sum_{i_2}^{Q_2}\pi_{(Q_1-i_1,Q_2-i_2)}=1,\label{4.20}
\end{equation}
\eqref{4.11} can be written as

\begin{equation}
  \pi_{(Q_1,Q_2)}\cdot(s+\mu)=\mu.\label{4.21}
\end{equation}

Note also that the continuous time Markov chain is always positive
recurrent. Using \eqref{4.11}-\eqref{4.19} after some straightforward
algebra we obtain expressions \eqref{4.1}-\eqref{4.4} for the
stationary probabilities.

\subsection{Proof of Lemma 1 ($Q_{2}^{*}(Q_1)$ is decreasing in $Q_1$)}

Since $\pi(Q_1,Q_2)$ is submodular, the following inequality yields
\begin{eqnarray}
\pi(Q_1+1,Q_2+1)-\pi(Q_1+1,Q_2)-\pi(Q_1,Q_2+1)+\pi(Q_1,Q_2)\leq 0,&& \nonumber\\ Q_1\in\{0,\ldots,\left\lfloor\frac{C}{a_1}\right\rfloor\},Q_2\in\{0,\ldots,\left\lfloor\frac{C- a_1 Q_1}{a_2}\right\rfloor\}.&&\label{5.1}
\end{eqnarray}

We will prove that
\begin{equation}
\pi(Q_1+1,Q_{2}^{'})-\pi(Q_1+1,Q_2)-\pi(Q_1,Q_{2}^{'})+\pi(Q_1,Q_2)\leq 0, \mbox{ for } Q_{2}^{'}>Q_2.\label{5.2}
\end{equation}

Adding \eqref{5.1} for $Q_2$ and \eqref{5.1} for $Q_2+1$ we obtain
\begin{eqnarray}
\pi(Q_1+1,Q_2+2)-\pi(Q_1+1,Q_2)-\pi(Q_1,Q_2+2)+\pi(Q_1,Q_2)\leq 0,&& \nonumber\\ Q_1\in\{0,\ldots,\left\lfloor\frac{C}{a_1}\right\rfloor-1\},Q_2\in\{0,\ldots,\left\lfloor\frac{C- a_1 Q_1}{a_2}\right\rfloor-2\}.&&\nonumber
\end{eqnarray}

Thus, we can prove by induction inequality \eqref{5.2}.

Now, for $Q_1=0$, we have $Q_{2}^{*}(0)\in[0,\left\lfloor\frac{C}{a_2}\right\rfloor]$ and
\begin{eqnarray}
\pi(0,Q_{2}^{*}(0))\geq \pi(0,Q_2), \text{ for all }Q_2\in[0,\left\lfloor\frac{C}{a_2}\right\rfloor].\label{5.3}
\end{eqnarray}

Also, from \eqref{5.1} for $Q_1=0$ we obtain
\begin{eqnarray}
\pi(1,Q_2+1)-\pi(1,Q_2)-\pi(0,Q_2+1)+\pi(0,Q_2)\leq 0,&&\nonumber\\
Q_2\in\{0,\ldots,\left\lfloor\frac{C}{a_2}\right\rfloor-1\}.&&\label{5.4}
\end{eqnarray}

In order to prove that $Q_{2}^{*}(1)\leq Q_{2}^{*}(0)$ it suffices to prove that\\ $\pi(1,Q_{2}^{*}(0))\geq \pi(1,Q_{2})$, $Q_2\in[Q_{2}^{*}(0),\left\lfloor\frac{C}{a_2}\right\rfloor-1]$.

From \eqref{5.2} for $Q_1=0$ and $Q_2=Q_{2}^{*}(0)$ we have that
\begin{equation}
\pi(1,Q_{2}^{'})-\pi(1,Q_{2}^{*}(0))-\pi(0,Q_{2}^{'})+\pi(0,Q_{2}^{*}(0))\leq 0, \ Q_{2}^{'}>Q_{2}^{*}(0), \nonumber
\end{equation}
which gives
\begin{equation}
\pi(1,Q_{2}^{'})-\pi(1,Q_{2}^{*}(0))\leq\pi(0,Q_{2}^{'})-\pi(0,Q_{2}^{*}(0)), \ Q_{2}^{'}>Q_{2}^{*}(0). \nonumber
\end{equation}

The second side of the previous inequality is smaller than or equal to zero because of \eqref{5.3}. So, $\pi(1,Q_{2}^{'})\leq \pi(1,Q_{2}^{*}(0))$, $Q_{2}^{'}>Q_{2}^{*}(0)$. Thus, $Q_{2}^{*}(1)\leq Q_{2}^{*}(0)$.

Using this line of arguments repeatedly, we prove that $Q_{2}^{*}(Q_1)$ is decreasing in $Q_1$.

\end{document}